\def\normo#1{\left\|#1\right\|}
\def\aabs#1{\left|#1\right|}
\def\norm#1{\|#1\|}
\def\jb#1{\langle#1\rangle}
\def\wt#1{\widetilde{#1}}
\def\wh#1{\widehat{#1}}
\newcommand{\R}{{\mathbb R}}
\newcommand{\Z}{{\mathbb Z}}
\newcommand{\ft}{{\mathcal{F}}}
\newcommand{\les}{{\lesssim}}
\newcommand{\ges}{{\gtrsim}}
\newcommand{\supp}{{\mbox{supp}}}
\def\jb#1{\langle#1\rangle}
\def\norm#1{\|#1\|}
\def\normo#1{\left\|#1\right\|}
\def\wt#1{\widetilde{#1}}
\def\wh#1{\widehat{#1}}
\def\aabs#1{\left|#1\right|}
\newcommand{\FL}{\widehat{H}}
\newcommand{\gFL}{\widehat{M}}
\newcommand{\fL}{\widehat{L}}
\newcommand{\FX}{\widehat{X}}
\newcommand{\e}{\varepsilon}
\newcommand{\EQ}[1]{\begin{equation}\begin{split} #1 \end{split}\end{equation}}
\newcommand{\EQN}[1]{\begin{equation*}\begin{split} #1 \end{split}\end{equation*}}
\newcommand{\Del}[1]{}
\newcommand{\CAS}[1]{\begin{cases} #1 \end{cases}}
\numberwithin{equation}{section}
\newtheorem{thm}{Theorem}[section]
\newtheorem{cor}[thm]{Corollary}
\newtheorem{lem}[thm]{Lemma}
\newtheorem{prop}[thm]{Proposition}
\theoremstyle{remark}
\newtheorem{rem}{Remark}
\newtheorem{theorem}{Theorem}[section]
\newtheorem{proposition}[theorem]{Proposition}
\theoremstyle{remark}
\newtheorem{remark}[theorem]{Remark}
\theoremstyle{definition}
\begin{document}

\subjclass[2020]{35Q53, 35Q55, 35E15}
\keywords{Complex-valued mKdV equation, Local well-posedness, Modulation space, Fourier-Lebesgue space}
	
\title[Complex-valued mKdV in general Fourier-Lebesgue spaces]{Complex-valued solutions of the mKdV equations in generalized Fourier-Lebesgue spaces}\thanks{The second author was supported by ARC FT230100588. The third author was supported by NSFC (No. 11971503) and Program for Innovation Research in Central University of Finance and Economics.}
	
\author[Z. Chen]{Zijun Chen}
\address{School of Mathematics, Monash University, VIC 3800, Australia }
\email{zijun.chen@monash.edu}
	
\author[Z. Guo]{Zihua Guo}
\address{School of Mathematics, Monash University, VIC 3800, Australia }
\email{zihua.guo@monash.edu}
	
\author[C. Huang]{Chunyan Huang}
\address{School of Statistics and Mathematics, Central University of Finance and Economics, Beijing
		100081, China}
\email{hcy@cufe.edu.cn}
	
\begin{abstract}
We study the \emph{complex-valued} solutions to the Cauchy problem of the modified Korteweg-de Vries equation on the real line. To study the low-regularity problems, we employ a generalized Fourier-Lebesgue space  $\gFL^{s}_{r,q}(\mathbb{R})$ that unifies the modulation spaces and the Fourier-Lebesgue spaces. We then prove sharp local well-posedness results in this space by perturbation arguments using $X^{s,b}$-type spaces. Our results improve the previous one in \cite{GV}.  
\end{abstract}
	
\maketitle
	
	
\section{Introduction}
	
We study the Cauchy problem of the \emph{complex-valued} modified Korteweg-de Vries (mKdV) equation on the real line:
\EQ{\label{eq:mkdv}
\CAS{\partial_t u+ \partial_x^3 u=\pm u^2 \partial_x u, \quad (x,t)\in \mathbb{R}^2,\\
u(x,0)=u_0(x),}}
where $u(x,t): \R\times \R \rightarrow \mathbb{C}$. The mKdV equation is a mathematical model that describes the weakly nonlinear propagation of long waves in shallow channels. 
The mKdV equation \eqref{eq:mkdv} is invariant under the following scaling transformation:
\begin{equation}\label{scaling}
u(x,t)\rightarrow u_{\lambda}(x,t):=\lambda^{}u(\lambda x, \lambda^{3}t), \quad \lambda>0.
\end{equation}
The critical Sobolev space of \eqref{eq:mkdv} is $\dot{H}^{-1/2}$, in the sense that the homogeneous Sobolev norm is invariant under the scaling transformation \eqref{scaling}.

The mKdV equation \eqref{eq:mkdv} has been extensively studied, especially for the real-valued cases. 
From the perspective of integrability, for complex-valued data, it may be more natural to consider the complex (Hirota) mKdV equation
\EQ{\label{eq:hmkdv}
\partial_t u+ \partial_x^3 u=\pm |u|^2 \partial_x u.
}
However, for the complex-valued mKdV equation \eqref{eq:mkdv}, there are also many interesting studies (e.g. see \cite{AM, Bona} and references therein). In particular, our analytic methods in this paper work for both versions of mKdV. 

We are interested in the local well-posedness (LWP) with low-regularity data of the mKdV equation \eqref{eq:mkdv}. In general, the purpose is to obtain LWP of \eqref{eq:mkdv} for initial data in $X$ with $X$ as large as possible. Many physically interesting data are in low-regularity spaces such as the Dirac measure and white noise. 
The first natural choice of $X$ is the Sobolev space $H^{s}(\mathbb{R})$, where the norm is defined by 
\begin{equation}\label{spa_H}
\|f\|_{H^{s}(\mathbb{R})}:=\left\|\langle\xi\rangle^{s} \widehat{f}(\xi)\right\|_{L_{\xi}^{2}}
\end{equation}
for $s \in \mathbb{R}$ and $\langle\cdot\rangle:=\left(1+|\cdot|^{2}\right)^{1/2}$. For $C^1$ well-posedness (requiring the solution map to be $C^1$ smooth), the optimal range is $s\geq 1/4$, see \cite{KPV2} for LWP, and \cite{CKSTT, Guo, Ki} for GWP. The solution map of \eqref{eq:mkdv} fails to be locally uniformly continuous in $H^{s}(\mathbb{R})$ for $s<1/4$, see \cite{CCT, KPV3}.
Hence, the iteration arguments are not applicable for well-posedness of \eqref{eq:mkdv} in this regime. 
In a recent breakthrough \cite{HKV}, the optimal $C^{0}$ GWP for \eqref{eq:hmkdv} in $H^{s}(\mathbb{R})$ for $s>-1/2$ was obtained by using the complete integrability of the equation.  Also see \cite{Forlano} for a simplified proof of GWP in $H^s$ for $s\geq 0$.  However, the methods do not work for the complex-valued mKdV \eqref{eq:mkdv}, but work for \eqref{eq:hmkdv}.  Indeed, blowup solutions for \eqref{eq:mkdv} were constructed in \cite{Bona}.
	
To look for rough solutions, some other types of function spaces are used. In the Fourier-Lebesgue space $\FL^s_r$, where
\begin{equation}\label{spa_FL}
\|f\|_{\FL^s_r(\mathbb{R})}:=\left\|\langle\xi\rangle^{s} \widehat{f}(\xi)\right\|_{L_{\xi}^{r^{\prime}}},\quad 1/r+1/r^{\prime}=1
\end{equation}
for $s \in \mathbb{R}, 1 \leq r \leq \infty$,
Grünrock \cite{Gr} first showed LWP of \eqref{eq:mkdv} and \eqref{eq:hmkdv} for $s \geq \frac{1}{2}-\frac{1}{2r}$ and $4/3<r\leq 2$. 
Moreover, the range of $s$ is sharp in the sense that the solution map to the focusing complex mKdV \eqref{eq:hmkdv} is not locally uniformly continuous in $\FL^s_r(\mathbb{R})$ for $s<\frac{1}{2}-\frac{1}{2r}$ and $1<r\leq 2$; see \cite{Gr}.
The range of $r$ was later extended in \cite{GV} to $1 < r\leq 2$. Note that $\FL^0_1(\mathbb{R})$ is critical in terms of the scaling symmetry \eqref{scaling}. 
	
Another alternative space is the modulation space $M^{s}_{r,q} (\mathbb{R})$, introduced by Feichtinger \cite{Fei} via the short-time Fourier transform. 
The norm is defined by 
\begin{equation}\label{spa_mo}
\|f\|_{M^{s}_{r,q}(\mathbb{R})}:=\left\|\langle k \rangle^{s}\|\Box_k f \|_{L^{r}_{x}}\right\|_{\ell_{k}^{q}(\mathbb{Z})}
\end{equation}
for $s \in \mathbb{R}, 1 \leq r, q \leq \infty$, where $\Box_{k}$ is a Fourier projection operator adapted to the unit interval centred at $k$ (see \eqref{box}). See \cite{book} for more examples of the applications of modulation space in various types of PDE. In \cite{CG} (see also \cite{OHW,OW}) LWP of \eqref{eq:mkdv} in $M^{s}_{2,q} (\mathbb{R})$ for $s\geq 1/4$ and $2\leq q \leq \infty$ was proved by using the Fourier restriction norm method adapted to modulation spaces and an improved $L^4$ Strichartz estimate inspired by \cite{GS}. In \cite{GH} an alternative simpler proof was given and similar results for dispersion-generalized mBO equations was obtained by proving an improvement of the classical bilinear Strichartz estimate instead of using the improved $L^4$ estimate.  

Inspired by the previous works, in this paper we consider a generalized Fourier-Lebesgue type space $\gFL^{s}_{r,q}$ that unifies the Fourier-Lebesgue and Modulation spaces, and prove {optimal} LWP of \eqref{eq:mkdv} in this space. Here
\begin{equation}\label{modu-leb-spa-int}
\|f\|_{\gFL^{s}_{r,q}(\mathbb{R})}:=\left\|\langle k \rangle^{s}\|\widehat{\Box_k f} \|_{L_{\xi}^{r'}}\right\|_{\ell_{k}^{q}(\mathbb{Z})}.
\end{equation}
This space is a special case of the family of modulation spaces, see \cite{Fei2,Fei3,Fei4} and was previously used (and called Fourier-amalgam space) for other equations in \cite{Forlano2, Forlano3}.
Our main result is 
\begin{thm}\label{th1}
Let $1< r \leq 2$ and $r^{\prime} \leq q\leq \infty$. Then the complex-valued mKdV equation \eqref{eq:mkdv} is $C^3$-locally well-posed in $\gFL^{s}_{r,q}(\mathbb{R})$ for $s\geq s(r):=\frac{1}{2}-\frac{1}{2r}$. Moreover, the data-to-solution map in $\gFL^{s}_{r,q}$ is not $C^3$ continuous at the origin if $s<\frac{1}{2}-\frac{1}{2r}$.
\end{thm}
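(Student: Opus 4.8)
The plan is to treat the two assertions separately: the positive local well-posedness by a contraction argument in an adapted Bourgain space, and the sharpness by exhibiting a failure of $C^3$ smoothness of the data-to-solution map. For the positive part I would first pass to the Duhamel formulation
\[
u(t) = e^{-t\p_x^3}u_0 \pm \tfrac13 \int_0^t e^{-(t-t')\p_x^3}\,\p_x\big(u^3\big)(t')\,dt',
\]
using that $u^2\p_x u = \tfrac13\p_x(u^3)$ for the holomorphic nonlinearity of \eqref{eq:mkdv} (the conjugated variant then covers \eqref{eq:hmkdv}). I would then introduce an $X^{s,b}$-type space $\mathcal{X}^{s,b}_{r,q}$ built on the $\gFL^{s}_{r,q}$ structure, roughly
\[
\|u\|_{\mathcal{X}^{s,b}_{r,q}} = \Big\|\LR{k}^{s}\,\big\|\LR{\ta-\x^3}^{b}\,\widehat{\Box_k u}(\x,\ta)\big\|_{L^{r'}_{\x,\ta}}\Big\|_{\ell^q_k},
\]
with $b$ slightly above $1/2$. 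The standard package of linear estimates — the homogeneous bound $\|e^{-t\p_x^3}u_0\|_{\mathcal{X}^{s,b}_{r,q}}\lesssim\|u_0\|_{\gFL^{s}_{r,q}}$ on a short time interval, the inhomogeneous Duhamel estimate gaining one unit in the modulation exponent, and the embedding $\mathcal{X}^{s,b}_{r,q}\hookrightarrow C_t\gFL^{s}_{r,q}$ for $b>1/2$ — reduces the whole problem to a single trilinear estimate.

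That trilinear estimate, namely
\[
\big\|\p_x(u_1u_2u_3)\big\|_{\mathcal{X}^{s,b-1}_{r,q}}\ \lesssim\ \prod_{i=1}^3\|u_i\|_{\mathcal{X}^{s,b}_{r,q}},\qquad s\ge s(r),
\]
is the heart of the argument and the step I expect to be hardest. I would dyadically localise the three inputs and the output in frequency and in modulation and exploit the resonance identity
\[
\x^3-\x_1^3-\x_2^3-\x_3^3 = 3(\x_1+\x_2)(\x_2+\x_3)(\x_1+\x_3),\qquad \x=\x_1+\x_2+\x_3,
\]
which forces the largest modulation to dominate $|\x_1+\x_2|\,|\x_2+\x_3|\,|\x_1+\x_3|$. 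Away from the resonant set, this modulation gain, combined with Hölder in the $\x,\ta$ variables and a convolution estimate in the localisation index $k$ — where the hypothesis $q\ge r'$ enters to close the $\ell^q_k$ summation — absorbs the derivative $\p_x$ and the loss in $s$. The genuinely delicate contributions are the near-resonant high$\times$high$\times$high interactions, where two frequencies nearly cancel and the oscillation gives almost nothing; there I would instead rely on a bilinear, $L^4$-type Strichartz estimate adapted to the $\gFL$ structure, in the spirit of the improved estimates of \cite{CG,GH,GS}, to reach the endpoint. Tracking the exponents through the case analysis should reveal that $s(r)=\tfrac12-\tfrac1{2r}$ is exactly the balance point at which every case closes, interpolating between the scaling-critical value $s=0$ at $r\to1$ and the classical threshold $s=1/4$ at $r=2$. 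Once this estimate holds, a fixed-point argument on a small ball gives existence, uniqueness, and real-analytic — in particular $C^3$ — dependence on the data.

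For the sharpness claim, I would argue that $C^3$-smoothness of the solution map at the origin would force its third derivative there — the trilinear Duhamel operator
\[
A_3(v) = \int_0^t e^{-(t-t')\p_x^3}\,\p_x\big((e^{-t'\p_x^3}v)^3\big)\,dt'
\]
— to be bounded from $\gFL^{s}_{r,q}$ into $C_t\gFL^{s}_{r,q}$. I would then choose $v=u_0$ with $\widehat{u_0}$ a sum of narrow bumps, of height tuned to normalise $\|u_0\|_{\gFL^{s}_{r,q}}$ and centred at frequencies $\sim N$ and $\sim -N$, so that the cube contains the fully resonant interaction $N+(-N)+N=N$ on which $\x^3-\x_1^3-\x_2^3-\x_3^3$ vanishes. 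On this interaction the phase provides no smoothing while $\p_x$ contributes a factor $\sim N$; evaluating $\|A_3(u_0)\|_{\gFL^{s}_{r,q}}$ and comparing it with $\|u_0\|_{\gFL^{s}_{r,q}}^3$ yields a lower bound growing like a positive power of $N$ precisely when $s<\tfrac12-\tfrac1{2r}$. This contradicts the boundedness of $A_3$ and hence the $C^3$ continuity of the solution map, establishing the asserted optimality of the threshold $s(r)$.
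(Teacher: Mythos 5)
Your overall route coincides with the paper's: Duhamel plus an $\widehat{L}^{r'}$-based Bourgain space layered over the $\ell^q_k$ box structure, reduction to a single trilinear estimate, a case analysis driven by the resonance identity, and the standard third-Picard-iterate argument for the failure of $C^3$ continuity (the paper's counterexample is exactly your two-bump datum, with the bump width tuned to $\beta\sim N^{-1/2}$ so that the resonance function stays $O(1)$ on the support, which is what yields the lower bound $tN^{-2s+1-1/r}$). Two points, however, need repair. First, taking $b$ ``slightly above $1/2$'' is not adequate in this scale of spaces: the transference principle and the linear/Duhamel estimates in $\widehat{L}^r$-based $X^{s,b}$ spaces require $b>1/r$ (and a dual modulation exponent $>-1/r'$), and since $1/r\ge 1/2$ for $1<r\le 2$ your choice fails for every $r<2$. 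The paper works with $b=1/r+\varepsilon$ and output modulation exponent $-1/r'+2\varepsilon$.

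Second, and more seriously, the step you defer to ``a bilinear, $L^4$-type Strichartz estimate adapted to the $\gFL$ structure'' is precisely where the new idea of the paper lives, and a generic such estimate does not close the argument. The obstruction is quantitative: the trilinear estimate demands the convolution bound $\ell^q*\ell^q*\ell^q\to\ell^q$ on the box indices, whereas Young only gives $\ell^1*\ell^1*\ell^q\to\ell^q$, so for large $q$ (up to $\infty$) each H\"older application loses a power of the number of participating boxes. The paper's Lemma \ref{lem:improve-bi} is designed to compensate for this: after decomposing the two comparable high frequencies $N_1\sim N_2$ into boxes of length $N_3$ and exploiting that the resulting space--time Fourier supports are finitely overlapping, one gains a factor $N_1^{1/r'}N_3^{-1/r'}$ over the Gr\"unrock--Vega bilinear estimate when $q\ge 2r'$; this is exactly what absorbs the $\ell^q$ loss in the high$\times$high$\to$low interactions (Case 3 and parts of Case 4 of the paper's proof). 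Without isolating this orthogonality gain, or an equivalent substitute, your case analysis will not close for $q$ near $\infty$, which is the whole content of the improvement over $\FL^s_r=\gFL^s_{r,r'}$. Note also that the fully resonant configuration $N_1\sim N_2\sim N_3\sim N$ with all pairwise frequency sums small, which is what produces the threshold $s\ge\frac12-\frac1{2r}$, is handled in the paper by the Gr\"unrock--Vega trilinear restriction estimates rather than by an $L^4$ bound alone.
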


\begin{rem}
When $r=1$, $\wh M^{0}_{1,\infty}$ is critical, namely it has the same scale as $H^{-1/2}$. We are not aware of any results concerning this case. We note that for $1<r\leq 2$, $\gFL^{s(r)}_{r,\infty}(\mathbb{R})\subset H^{-\frac{1}{2}+}$. Indeed,
\EQN{
\norm{f}_{H^{-\frac{1}{2}+}}=& \norm{\jb{\xi}^{-\frac{1}{2}+}\widehat f}_{L^2}\\
\sim & \norm{\jb{k}^{-s(r)-\frac{1}{2}+} \jb{k}^{s(r)} \widehat {\Box_k f}}_{l_k^2L^2}\les \norm{f}_{\gFL^{s(r)}_{r,\infty}}.
}
For the real-valued case, the data in $\gFL^{s(r)}_{r,q}$ is contained in \cite{HKV}. It is thus natural to ask whether one can get GWP in $\gFL^{s(r)}_{r,q}$. When $r=2$ and $q<\infty$, this was proved in \cite{OW}.
\end{rem}

Theorem \ref{th1} improves the results by Gr\"unrock and Vega \cite{GV} in $\FL^s_r(\mathbb{R})$ which corresponds to the case  $q=r^\prime$ in $\gFL^{s}_{r,q}(\mathbb{R})$.
The proof of the Theorem \ref{th1} combines the ideas in \cite{GV} using the (trilinear) Fourier restriction estimates and in \cite{GH} proving refined bilinear Strichartz estimates. Our new ingredient is an improvement of the classical bilinear Strichartz estimate as stated in Lemma \ref{lem:improve-bi}. 
	
\section{Preliminary}

We use the notation $X\lesssim Y$ for $X, Y \in \mathbb{R}$ to denote that there exists a constant $C>0$ such that $X\leq CY$.  The notation $X\sim Y$ denotes that there exist positive constants $c, C$ such that $cY \leq X \leq C Y$. 
For $a\in \R$, $a\pm$ denotes $a\pm \delta$ for any sufficiently small $\delta>0$.
We use capitalized variables $\{N, M, N_1, N_2,\cdots\}$ to denote dyadic numbers, and lower-case variables $\{i,j,k,l,m,n,\cdots\}$ to denote integers unless otherwise specified.
We will use $\widehat{u}$ to denote the standard Fourier transform $\mathcal{F}_x u$, $\mathcal{F}_t u$ or $\mathcal{F}_{t, x} u$. 

Let $\varphi\in C_0^\infty(\R)$ be a real-valued, non-negative, even, and radially-decreasing function such that $\supp \varphi\subset [-5/4, 5/4]$ and $\varphi\equiv 1$ in $[-1, 1]$.  
Define $\eta(\xi):=\frac{\varphi(\xi)}{\sum_{k\in \Z}\varphi(\xi-k)}$.  
For $\lambda>0$ and $m\in \R$, we define the frequency projection operator adapted to the interval centered at $\lambda m$ and with length $\lambda$ as
\EQ{\label{box}
\widehat{(\Box^{\lambda}_{m} f)}(\xi):=\eta({\xi}/{\lambda}-m)\widehat{f}(\xi).
}
In the following, we call it box decomposition with length $\lambda$ in the frequency space.
When $\lambda=1$, we simply write $\Box^{\lambda}_{m}=\Box_{m}$. 
Let $\chi(\xi)=\varphi(\xi)-\varphi(2\xi)$.
For a dyadic number $N\in 2^{\Z}$, we define the Littlewood-Paley projectors: $\widehat{P_1f}(\xi):=\varphi(\xi)\widehat{f}(\xi)$ and for $N>1$
\EQN{
\widehat{P_Nf}(\xi):=\chi(\xi/N)\widehat{f}(\xi), \quad \widehat{P^{\pm}_Nf}(\xi):=\chi(\xi/{N})1_{\pm \xi\geq 0}\cdot \widehat{f}(\xi).
} 

We write $\fL^r=\FL^0_r$. Define $\fL_t^p\fL_x^q$ to be a Banach space with the norm $\norm{u}_{\fL_t^p\fL_x^q}=\norm{\wh u(\xi,\tau)}_{L_\tau^{p'}L_\xi^{q'}}$ and similarly for $\fL_x^q\fL_t^p$. For $s \in \mathbb{R}$ and $1 \leq r, q \leq \infty$, $\gFL^{s}_{r,q}(\mathbb{R})$ is defined to be a Banach space with the norm
\begin{equation}\label{modu-leb-spa}
\|f\|_{\gFL^{s}_{r,q}(\mathbb{R})}:=\left\|\langle k \rangle^{s}\|\Box_k f \|_{\fL_{x}^{r}}\right\|_{\ell_{k}^{q}(\mathbb{Z})}.
\end{equation}
By the definition, we have $M^{s}_{r, q}\subset \gFL^{s}_{r,q}$ if $1 \leq r \leq 2$, $\gFL^{s}_{r,r'}=\FL^s_r$, $M^{s}_{2, q}=\gFL^{s}_{2,q}$.  Let $\omega(\xi)=\xi^3$ be the dispersion relation associated with the mKdV equation \eqref{eq:mkdv} and let $W(t)=\ft^{-1}e^{it\omega(\xi)}\ft$ be the linear propagator. 
The Fourier-Lebesgue type Bourgain space $\FX_r^{s,b}$ associated with \eqref{eq:mkdv} is defined by the norm
\EQN{
\|u\|_{\FX_r^{s,b}}:=\left\|\langle \xi\rangle^s\langle \tau-\omega(\xi)\rangle^b \widehat{u}(\xi, \tau)\right\|_{L^{r^{\prime}}_{\tau, \xi}}.
}
When $s=b=0$, we write $\FX^{s,b}_{r}$ as $\fL^{r}_{t,x}$ for simplicity.
To prove our result, we use an appropriate variant of the Bourgain space $\FX_r^{s,b}$ adapted to modulation space, whose norm is defined as
\begin{equation}\label{mo-cha}
\begin{aligned}
\|u\|_{\FX^{s,b}_{r,q}}:&= \left\| \left\|\Box_k u\right\|_{\FX_r^{s,b}}\right\|_{\ell_k^{\,q}(\mathbb{Z})}\sim \left\|\langle k\rangle^s\left\|\eta(\xi-k)\langle \tau-\omega(\xi)\rangle^b \widehat{u}(\xi, \tau)\right\|_{L^{r^{\prime}}_{\tau, \xi}}\right\|_{\ell_k^{\,q}(\mathbb{Z})}.
\end{aligned}
\end{equation}
We see that $\FX^{s,b}_{r,r'}=\FX^{s,b}_r$. 
The dual space $(\FX^{s,b}_{r, q})^{\prime}$ of $\FX^{s,b}_{r, q}$ can be identified as $\FX^{-s,-b}_{r^\prime, q^\prime}$. 

We collect some basic properties of the space $\FX^{s,b}_{r, q}$. 
For $1\leq q\leq p\leq \infty$, we have 
\begin{equation}\label{Holder}
\|P_N u\|_{\FX^{s,b}_{r, p}}\leq \|P_N u\|_{\FX^{s,b}_{r, q}}\lesssim N^{1/q-1/p}\|P_N u\|_{\FX^{s,b}_{r, p}}
\end{equation}
and for $1\leq r\leq 2$
\begin{equation}\label{Holder2}
\|P_N u\|_{\widehat{L}_{x}^{r'} \widehat{L}_{t}^{\infty}}= \|\wh{P_N u}\|_{L_\xi^r L_\tau^1}\lesssim N^{1/r-1/r'}\|P_N u\|_{\FX^{0,1/r+}_{r}}.
\end{equation}
The following lemma allows us to `transfer' linear estimates in $\fL_x^r$ to estimates in $\FX^{0, b}_{r}$.

\begin{lem}[Transference Principle]\label{lem:ext}
Let $Z$ be any space-time Banach space satisfying the time modulation estimate
\EQ{
\left\|g(t)F(x,t)\right\|_Z\leq \|g\|_{L_t^{\infty}(\R)}\norm{F(x, t)}_Z }
for any $F\in Z$ and $g\in L_t^{\infty}(\R)$. Let $T:(h_1, \cdots, h_k)\rightarrow T(h_1, \cdots, h_k)$ be a spatial multilinear operator for which one has the estimate
\EQ{\left\|T(W(t)f_{1},\cdots, W(t)f_{k})\right\|_Z\les \prod_{j=1}^k \|f_{j}\|_{\fL_x^r}}
for all $f_{1},\cdots, f_{k}\in {\fL^r_x}$. Then for $b>1/r$, we have the estimate
\EQ{\|T(u_1, \cdots, u_k)\|_{Z}\les_k \prod_{j=1}^k\|u_j\|_{\FX_r^{0,b}}}
for all $u_1, \cdots, u_k \in \FX_r^{0,b}$.
\end{lem}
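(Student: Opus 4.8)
The plan is to realize each input $u_j$ as a continuous superposition of modulated free solutions, and then feed the two hypotheses through term by term. Starting from the space-time Fourier inversion formula for $u_j$ and substituting $\tau=\la+\om(\xi)$, I would define, for each fixed modulation parameter $\la\in\R$, a spatial profile $f_{j,\la}$ via $\wh{f_{j,\la}}(\xi):=\wh{u_j}(\xi,\la+\om(\xi))$. Since $W(t)=\ft^{-1}e^{it\om(\xi)}\ft$, this substitution produces the representation
\EQ{
u_j(x,t)=\int_\R e^{it\la}\,\brk{W(t)f_{j,\la}}(x)\,d\la,
}
which displays $u_j$ as an average over $\la$ of the free evolutions $W(t)f_{j,\la}$ twisted by the harmless phase $e^{it\la}$.

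By the multilinearity of $T$, applying $T$ to the $u_1,\dots,u_k$ and expanding gives
\EQ{
T(u_1,\dots,u_k)=\int_{\R^k}e^{it(\la_1+\cdots+\la_k)}\,T\brk{W(t)f_{1,\la_1},\dots,W(t)f_{k,\la_k}}\,d\la_1\cdots d\la_k.
}
Taking the $Z$-norm, I would pull it inside the integral by Minkowski's inequality, and then remove the scalar phase $e^{it(\la_1+\cdots+\la_k)}$ using the time modulation estimate, since this phase has $L_t^\infty$-norm equal to $1$. The hypothesis on $T$ then bounds the integrand by $\prod_{j=1}^k\norm{f_{j,\la_j}}_{\fL_x^r}$, and because the resulting integral factorizes over $j$, it remains only to estimate $\int_\R \norm{f_{j,\la}}_{\fL_x^r}\,d\la$ for each $j$.

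The final step, which is exactly where the assumption $b>1/r$ is needed, is a Hölder estimate in $\la$. Writing $\norm{f_{j,\la}}_{\fL_x^r}=\norm{\wh{u_j}(\xi,\la+\om(\xi))}_{L_\xi^{r'}}$ and inserting the factor $\jb{\la}^{-b}\jb{\la}^{b}$, I would apply Hölder with exponents $r$ and $r'$. The factor $\jb{\la}^{-b}$ belongs to $L_\la^r$ precisely because $br>1$, contributing a finite constant, while the remaining factor, after undoing the substitution $\la=\tau-\om(\xi)$ and using that both Lebesgue exponents coincide (so that the iterated norm collapses to a joint one), reassembles into $\norm{\jb{\tau-\om(\xi)}^b\wh{u_j}}_{L^{r'}_{\tau,\xi}}=\norm{u_j}_{\FX_r^{0,b}}$. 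Collecting the constants then yields the claimed multilinear bound.

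I expect the main technical subtlety to lie not in any single estimate but in the bookkeeping: justifying the Minkowski interchange and the Fubini-type collapse of iterated $L^{r'}$-norms, and confirming that the time modulation hypothesis is precisely the property licensing removal of the modulating phases $e^{it\la}$. The threshold $b>1/r$ is forced solely by the requirement $\jb{\la}^{-b}\in L_\la^r$, so no sharper input about $T$ or $Z$ is needed beyond the two stated hypotheses.
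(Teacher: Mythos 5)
Your argument is correct and is precisely the standard proof of the transference principle that the paper omits and delegates to Lemma 2.1 of Gr\"unrock and Lemma 4.1 of Tao: the foliation $\wh{f_{j,\la}}(\xi)=\wh{u_j}(\xi,\la+\om(\xi))$, removal of the phases $e^{it\la_j}$ via the modulation hypothesis, and H\"older in $\la$ with $\jb{\la}^{-b}\in L^r_\la$ forcing $b>1/r$ are exactly the intended steps. No gaps; the Minkowski/Fubini bookkeeping you flag is routine.
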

	
\begin{proof}
When $k=1$, this was proved in Lemma 2.1 \cite{Gr}. For $k\geq 2$, one can prove it by slightly modifying the proof in Lemma 4.1 \cite{Tao2} and Lemma 2.1 \cite{Gr}. We omit the details. 
\end{proof}

A straightforward consequence of Lemma \ref{lem:ext} is the embedding 
\EQ{\FX_{r, q}^{s, b}\subset C(\mathbb{R}; \gFL^{s}_{r,q})}
for all $b>1/r$.

\begin{lem}[Linear estimate, \cite{Gr}]\label{linear}{\rm (1)} Given $s, b \in \mathbb{R}$ and $1 \leq r,q \leq \infty$, we have
\begin{equation}
\left\|\varphi(t)W(t)f\right\|_{\FX_{r, q}^{s, b}} \lesssim\|f\|_{\gFL_{r,q}^{s}}.
\end{equation}
{\rm (2)} Given $s\in \mathbb{R}$, $1 < r<\infty$, $1 \leq q \leq \infty$ and $b{^{\prime}}+$ $1 \geq b \geq 0 \geq b^{{\prime}}>-1 / r{^{\prime}}$, we have
\begin{equation}
\left\|\varphi(t/T)\int_{0}^{t} W(t-t') F(t^{{\prime}}) d t^{{\prime}}\right\|_{\FX_{r, q}^{s, b}} \lesssim T^{1+b^{{\prime}}-b}\|F\|_{\FX_{r,q}^{s, b^{{\prime}}}}
\end{equation}
for any $0<T \leq 1$.
\end{lem}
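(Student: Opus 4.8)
The plan is to treat the two estimates separately, exploiting throughout that $W(t)$ is a Fourier multiplier in $x$ while the cut-offs and the time integration act only on $t$; hence all these operations commute with the sharp frequency weight $\jb{k}^s$, with multiplication by $\eta(\x-k)$, and with the outer $\ell_k^q$ norm in \eqref{mo-cha}. Both estimates therefore decouple across $k$ and reduce to statements in the variables $(\x,\ta)$ measured in $L^{r'}_{\ta,\x}$, with $\jb{k}^s$ and $\ell^q_k$ carried along unchanged.

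For part (1) I would simply compute the space-time Fourier transform of $\varphi(t)W(t)f$, which equals $\wh\varphi(\ta-\om(\x))\wh f(\x)$. Substituting this into \eqref{mo-cha} and integrating first in $\ta$ via $\mu=\ta-\om(\x)$ isolates, for each fixed $\x$, the constant
\EQN{
\norm{\jb{\ta-\om(\x)}^b\wh\varphi(\ta-\om(\x))}_{L^{r'}_\ta}=\norm{\jb{\mu}^b\wh\varphi(\mu)}_{L^{r'}_\mu}=:C_{b,r}<\I,
}
which is finite for every $b$ since $\varphi$ is Schwartz. The surviving factor $\norm{\eta(\x-k)\wh f}_{L^{r'}_\x}=\norm{\Box_k f}_{\fL^r_x}$ reassembles into $\norm{f}_{\gFL^s_{r,q}}$ after applying $\jb{k}^s$ and $\ell^q_k$, which is exactly the claim.

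For part (2) I would conjugate by the propagator: with $u(t)=\int_0^t W(t-t')F(t')\,dt'$, $v=W(-t)u$ and $G=W(-t)F$ one has $v(t)=\int_0^t G(t')\,dt'$, and since conjugation shifts $\ta\mapsto\ta+\om(\x)$ it replaces the weight $\jb{\ta-\om(\x)}$ by $\jb{\ta}$. The estimate then becomes purely temporal: for each frozen $\x$,
\EQN{
\norm{\jb{\ta}^b\,\ft_t\big[\varphi(t/T)\textstyle\int_0^t G(t')\,dt'\big]}_{L^{r'}_\ta}\les T^{1+b'-b}\norm{\jb{\ta}^{b'}\wh G}_{L^{r'}_\ta},
}
after which $\jb{k}^s$ and $\ell^q_k$ are applied. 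To establish this one-dimensional inequality I would use the representation $\int_0^t G=\int\wh G(\ta)\frac{e^{it\ta}-1}{i\ta}\,d\ta$ and split the $\ta$-integral into $\abs{\ta}\ges1$ and $\abs{\ta}\les1$. On $\abs{\ta}\ges1$ the decomposition $\frac{e^{it\ta}-1}{i\ta}=\frac{e^{it\ta}}{i\ta}-\frac{1}{i\ta}$ separates a transported piece (whose cut-off transform is essentially $\wh{\varphi_T}(\cdot-\ta)/\ta$) from a constant-in-$t$ piece proportional to $\varphi(t/T)$; on $\abs{\ta}\les1$ I would Taylor expand $e^{it\ta}-1=\sum_{n\ge1}(it\ta)^n/n!$, turning each term into a moment of $\wh G$ against $\varphi(t/T)t^n$, whose transforms supply the powers $T^{1+b'-b}$.

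The hard part is carrying out this bookkeeping in the weighted $L^{r'}_\ta$ norm rather than the familiar Hilbert-space ($L^2_\tau$) setting. The constant-in-$t$ term and the zeroth Taylor moment require controlling $\int\wh G(\ta)\jb{\ta}^{-1}\,d\ta$-type quantities by $\norm{\jb{\ta}^{b'}\wh G}_{L^{r'}_\ta}$; this is precisely where the hypotheses $b'>-1/r'$ (via H\"older in $\ta$ with dual exponent $r$, since $\jb{\ta}^{-b'-1}\in L^r$ exactly when $b'>-1/r'$), $b\ge0$, and $b\le b'+1$ enter, guaranteeing both the convergence of the relevant integrals and the sharp power of $T$. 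As this temporal estimate is standard once these thresholds are respected, I would either transcribe the classical $L^2$-based inhomogeneous argument to the $L^{r'}$ setting or simply invoke the version already established in \cite{Gr}.
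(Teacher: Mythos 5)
Your proposal is correct: the paper offers no proof of this lemma at all, simply citing \cite{Gr}, and your argument—observing that everything decouples across the boxes $\Box_k$ so that the $\jb{k}^s$ weight and $\ell^q_k$ norm ride along for free, then proving part (1) by direct computation of $\wh\varphi(\tau-\omega(\xi))\wh f(\xi)$ and part (2) by conjugating with $W(t)$ and invoking the standard one-dimensional temporal estimate (where $b'>-1/r'$, $b\ge 0$, $b\le b'+1$ enter exactly as you say)—is precisely the standard proof underlying the cited reference, extended to general $q$ in the only natural way. No gap.
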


By the standard iteration arguments (see \cite{Tao, Gr}), to prove Theorem \ref{th1} it suffices to establish the following nonlinear estimate.
\begin{prop}\label{prop:tri}
Assume that $1< r \leq 2$, $r^{\prime} \leq q\leq \infty$, and $s\geq s(r)=\frac{1}{2}-\frac{1}{2r}$. Then for $0<\varepsilon \ll 1$ we have
\begin{equation}\label{tri_1}
\left\|\varphi(t)\partial_{x}(u_1u_2u_3)\right\|_{\FX_{r, q}^{s, -1/r^{\prime}+2\varepsilon}}\lesssim \prod^3_{j=1}\|u_j\|_{\FX_{r,q}^{s, 1/r+\varepsilon}}.
\end{equation}
\end{prop}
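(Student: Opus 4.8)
The plan is to prove \eqref{tri_1} by duality, reducing it to a four‑linear integral estimate over the convolution hypersurface, and then to play the algebraic resonance relation of the cubic term off against refined bilinear Strichartz estimates. First I would discard the cutoff $\varphi(t)$, which is harmless by the time‑modulation property assumed in Lemma \ref{lem:ext}, and use the identification $(\FX^{s,-1/r'+2\e}_{r,q})'=\FX^{-s,1/r'-2\e}_{r',q'}$ to reduce matters to the bound
\[
\left|\iint \partial_x(u_1u_2u_3)\,\overline{v}\,dx\,dt\right|\les \|v\|_{\FX^{-s,1/r'-2\e}_{r',q'}}\prod_{j=1}^3\|u_j\|_{\FX^{s,1/r+\e}_{r,q}}.
\]
Passing to the Fourier side, the derivative produces the factor $\xi=\xi_1+\xi_2+\xi_3$, and with the modulations $\sigma_j=\tau_j-\xi_j^3$, $\sigma=\tau-\xi^3$ one has the exact identity
\[
\sigma-\sigma_1-\sigma_2-\sigma_3=-\big(\xi^3-\xi_1^3-\xi_2^3-\xi_3^3\big)=-3(\xi_1+\xi_2)(\xi_2+\xi_3)(\xi_3+\xi_1),
\]
so that $\max(\jb{\sigma},\jb{\sigma_1},\jb{\sigma_2},\jb{\sigma_3})\gtrsim |\xi_1+\xi_2|\,|\xi_2+\xi_3|\,|\xi_3+\xi_1|$. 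Since $\xi_i+\xi_j=\xi-\xi_k$, this resonance degenerates exactly when the output frequency coincides with one of the inputs; otherwise the largest modulation is large and can be traded, through the weights $\jb{\sigma}^{b}$, for the derivative $\xi$ and for extra decay driving the dyadic and box sums.

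Next I would Littlewood–Paley decompose, writing $u_j=\sum_{N_j}P_{N_j}u_j$ and $v=\sum_N P_N v$, and reduce by symmetry to $N_1\ge N_2\ge N_3$. The argument then splits according to whether two input frequencies nearly cancel. In the non‑resonant regime, where all three pairwise sums $\xi_i+\xi_j$ are comparable to the largest frequency, the resonance function is a positive power of $N_{\max}$; after placing the largest modulation on the appropriate factor I would invoke the trilinear Fourier restriction estimates in the spirit of \cite{GV}, transferred from free solutions to the scale $\FX^{0,b}_r$ by Lemma \ref{lem:ext}, to absorb $\xi$ and to sum the dyadic pieces against the $s\ge s(r)$ weights. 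The auxiliary embeddings \eqref{Holder} and \eqref{Holder2} would be used here to interchange $\ell^q$ exponents and to place the lowest‑frequency factor and the dual function into $\fL$‑based norms.

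The resonant regime, where two high frequencies nearly cancel (say $\xi_1\approx-\xi_2$ with both of size $N_1$), is the crux. Here the resonance function degenerates, the modulation gain is insufficient, and all of the required smoothing must be extracted from the bilinear interaction of the two high‑frequency factors $(P_{N_1}u_1,P_{N_2}u_2)$. The plan is to apply the refined bilinear Strichartz estimate of \cite{GH} together with the improved estimate of Lemma \ref{lem:improve-bi}—again lifted to $\FX^{0,b}_{r}$ by the Transference Principle—to this pair, while reserving the weaker linear bound \eqref{Holder2} for the remaining low‑frequency factor and for $v$. The improvement in Lemma \ref{lem:improve-bi} over the classical bilinear estimate is precisely what produces a gain in $N_1$ that remains summable in the box index across the full range $1<r\le 2$; this is where I expect the main obstacle to lie, and it is the step that consumes both hypotheses $s\ge s(r)$ and $r'\le q$.

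Finally I would carry out the box and dyadic summations. The regularity threshold $s\ge s(r)$ is dictated by balancing the single derivative $\xi\sim N_{\max}$ and the bilinear smoothing against the weights, exactly at $s(r)=\tfrac12-\tfrac1{2r}$, which matches the sharpness claim in Theorem \ref{th1}. The condition $r'\le q$ enters through the $\ell^q$ summation in the box index $k\approx k_1+k_2+k_3$: the bilinear estimate naturally outputs an $\ell^{r'}$‑type quantity, and the embedding $\ell^{r'}\subset\ell^q$ (valid precisely when $r'\le q$), combined with Young's inequality in the box index, lets one reassemble the $\gFL^{s}_{r,q}$‑type norms and close \eqref{tri_1}.
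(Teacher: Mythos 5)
Your overall skeleton (duality, the resonance identity, Littlewood--Paley plus unit-box decomposition, trading the largest modulation against the derivative) matches the paper's proof, but there is a genuine gap in your treatment of the resonant regime: you identify it as the crux and then assign it the wrong tool. The configuration that actually forces the threshold $s\ge \frac12-\frac1{2r}$ is the fully coherent one $N_1\sim N_2\sim N_3\sim N$ with $\xi_1\approx\xi_3\approx-\xi_2\approx-\xi$, i.e.\ all four frequencies confined to diagonal unit boxes $(k,-k,k,-k)$ (Term $I$ of Case 4 in the paper). There the modulation gain is essentially absent, and, more importantly, \emph{every} bilinear pairing is degenerate: for each pair one of $|\xi_i+\xi_j|$, $|\xi_i-\xi_j|$ is $O(1)$, so Corollary \ref{cor:bi1}(2) (which requires $|k_1+k_2|,|k_1-k_2|\ge 2$) does not apply, and Lemma \ref{lem:improve-bi} applied to the pair $(u_1,u_2)$ with $N_3\sim 1$ carries the factor $N_1^{1/r'-2/q}$, a loss that grows to $N_1^{1/r'}$ as $q\to\infty$ (for $r=2$, $q=\infty$ it yields no gain at all, leaving a bound of order $N_1^{1-2s}$, far from $s=1/4$). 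So ``extracting all of the required smoothing from the bilinear interaction of the two high-frequency factors'' cannot close this case for large $q$. The paper instead applies the Gr\"unrock--Vega \emph{trilinear} restriction estimate (Lemma \ref{lem:trilinear-FL}(b)) to the three diagonally localized inputs, producing $\prod_j N_j^{-1/(3r)}=N_1^{-1/r}$ and hence the bound $N_1^{1-1/r-2s}$, which is precisely where $s\ge\frac1{2r'}$ comes from. Your plan reserves the trilinear estimates for the non-resonant regime, where the paper in fact gets by with bilinear estimates; the allocation is reversed, and in the diagonal case the reversal is fatal rather than merely inefficient.

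A secondary but substantive error is your account of the hypothesis $r'\le q$. Larger $q$ makes the estimate \emph{harder}, not easier (see the remark after Proposition \ref{prop:tri}: Young's inequality only gives $\ell^1\ast\ell^1\ast\ell^q\to\ell^q$, so H\"older in the box index costs positive powers of the number of participating boxes). The embedding $\ell^{r'}\subset\ell^q$ does not ``reassemble'' the norms for free; what absorbs these losses is the decoupling built into Lemma \ref{lem:improve-bi} (the gain $(N_1/N_3)^{1/r'}$ over the classical bilinear bound, available only for $q\ge 2r'$ and interpolated/embedded for $r'\le q\le 2r'$) together with the diagonal pairing of boxes in the resonant terms. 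Without tracking these $k$-summation losses case by case, the dyadic bounds you would obtain do not sum for $q$ near $\infty$.
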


\begin{remark}
When $q$ is smaller, the above estimate is easier to prove. Indeed, besides to gain the regularity, we need to overcome the loss in the following convolution equality
\EQ{
l^q*l^q*l^q\to l^q. 
}
By Young's inequality, we only have boundedness from $l^1*l^1*l^q\to l^q$. Therefore, there is a loss of volume $|E_1|^{1/q'} |E_2|^{1/q'}$ of the input functions in using the H\"older inequality. When $q$ is bigger, the loss is worse.  We have to exploit dedicate decoupling properties among the frequency interactions. 
\end{remark}

\section{Multilinear Strichartz estimates}

In this section, we collect various linear and multilinear Strichartz and Fourier restriction estimates that are crucial in the next section to prove Proposition \ref{prop:tri}.  The first is the linear estimate.

\begin{lem}[Corollary 3.6 and 3.7 \cite{Gr}]\label{lem:str}
Assume $\frac{2}{p}+\frac{1}{q}=\frac{1}{r}$.
Then the estimate
\begin{equation}\label{corollary3.6}
\left\|D^{1/p}W(t)f\right\|_{L_t^pL_x^q}\les \|f\|_{\fL_{x}^{r}}
\end{equation}
holds true if one of the following conditions is fulfilled:
\EQN{
&(i)\quad  0\leq 1/p \leq 1/4, 0\leq 1/q < 1/4,\\
&(ii)\quad 1/4 \leq 1/q \leq 1/q+1/p <1/2,\\
&(iii)\quad (p,q)=(\infty, 2).
}
\end{lem}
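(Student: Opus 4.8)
The plan is to exploit the scaling built into the hypothesis: a one-line computation with $f\mapsto f(\lambda\cdot)$ shows that the left side of \eqref{corollary3.6} scales like $\lambda^{-2/p-1/q}$ and the right side like $\lambda^{-1/r}$, so the relation $\frac2p+\frac1q=\frac1r$ is \emph{exactly} the condition making the whole family scale invariant. I would therefore prove the estimate at a few extreme values of $(1/p,1/q)$ and fill the regions (i) and (ii) by interpolation. Plotting $(1/p,1/q)$ in the plane, the closures of (i) and (ii) lie in the convex hull of the four points
\[
A=(0,0),\qquad B=(0,\tfrac12),\qquad C=(\tfrac14,\tfrac14),\qquad D=(\tfrac14,0).
\]
Since the constraint $\frac2p+\frac1q=\frac1r$ is affine it is preserved under interpolation, which automatically selects the correct $r$ (hence the correct $\fL^r_x$ on the right) at every intermediate point; the variable derivative order $D^{1/p}$ is absorbed by Stein's analytic interpolation applied to the family $T_z=|\xi|^{z}W(t)$, while the spaces are identified through $[L^{p_0}_tL^{q_0}_x,L^{p_1}_tL^{q_1}_x]_\theta=L^{p_\theta}_tL^{q_\theta}_x$ and $[\fL^{r_0}_x,\fL^{r_1}_x]_\theta=\fL^{r_\theta}_x$. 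I would keep all endpoint estimates frequency-global, so that no Littlewood--Paley summation is needed.

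Two of the four endpoints are immediate. At $A$ we have $r=\infty$ and $\|W(t)f\|_{L^\infty_{t,x}}\le\|\widehat f\|_{L^1}=\|f\|_{\fL^\infty_x}$ because $W(t)$ is a Fourier multiplier of modulus one; at $B$ we have $r=2$ and $\fL^2_x=L^2$, so the estimate is the Plancherel identity $\|W(t)f\|_{L^\infty_tL^2_x}=\|f\|_{L^2}$, which is precisely case (iii). All the content is thus in $C$ and $D$, where the curvature of the dispersion $\omega(\xi)=\xi^3$ must be used. At $D=(\tfrac14,0)$ (still $r=2$) the target is the maximal-in-space bound $\|D^{1/4}W(t)f\|_{L^4_tL^\infty_x}\les\|f\|_{L^2}$, a Kenig--Ponce--Vega--type estimate that I would establish by stationary phase on the oscillatory integral $\int e^{i(x\xi+t\xi^3)}|\xi|^{1/4}\widehat f(\xi)\,d\xi$; a frequency-$N$ wave packet shows it is sharp, with the transition at $|t|\sim N^{-3}$ between the non-oscillatory and stationary-phase regimes.

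The decisive endpoint is $C=(\tfrac14,\tfrac14)$, whose target is the $\fL^{4/3}$ Strichartz estimate $\|D^{1/4}W(t)f\|_{L^4_{t,x}}\les\|\widehat f\|_{L^4}$ (scaling forces the exponent $\tfrac43$, not $2$). I would prove it by the bilinear route: writing $g=D^{1/4}W(t)f$, Plancherel gives $\|g\|_{L^4_{t,x}}^2=\|g\overline g\|_{L^2_{t,x}}=\|\widehat{g\overline g}\|_{L^2_{\xi,\tau}}$, and $\widehat{g\overline g}$ is a convolution of two measures supported on the curve $\{(\xi,\xi^3)\}$. Resolving the delta and changing variables by $(\xi_1,\xi_2)\mapsto(\xi_1\pm\xi_2,\ \xi_1^3\pm\xi_2^3)$, whose Jacobian is $3(\xi_1-\xi_2)(\xi_1+\xi_2)$, converts the $L^2$ norm into an integral against the kernel $|\xi_1\xi_2|^{1/2}/|\xi_1^2-\xi_2^2|$; the Jacobian is exactly what produces the $D^{1/4}$ gain and a Hausdorff--Young factor in the $\fL^r$ count. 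Interpolating $B$ and $C$ (with a point of segment $AB$) fills region (ii), and interpolating $A$, $D$, $C$ fills region (i).

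I expect the main obstacle to be this endpoint $C$. The kernel $|\xi_1\xi_2|^{1/2}/|\xi_1^2-\xi_2^2|$ is homogeneous of degree $-1$ but its Schur integral $\int_0^\infty ds/|1-s^2|$ diverges logarithmically at $s=1$, i.e.\ on the diagonal $\xi_1=\pm\xi_2$ of near-parallel interactions. This is precisely why the exact corner $C$ is \emph{excluded} from both (i) (which requires $1/q<\tfrac14$) and (ii) (which requires $1/p+1/q<\tfrac12$), and why I would instead prove the bilinear estimate at interior points $C_\e$ slightly inside the admissible region—taming the diagonal by a Whitney decomposition with an $\e$-loss—and then interpolate with $A,B,D$ to recover every interior point of the open regions (i) and (ii) with a finite constant. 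Controlling this diagonal degeneracy, together with the sharp stationary-phase bound at $D$, is the crux; the remaining steps are the affine bookkeeping of exponents under interpolation.
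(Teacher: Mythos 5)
First, a point of comparison: the paper does not prove this lemma at all --- it is imported verbatim from Gr\"unrock \cite{Gr} (Corollaries 3.6 and 3.7) --- so your proposal can only be measured against Gr\"unrock's argument rather than anything in this paper. Your overall architecture is the standard one and is essentially his: verify that $\frac{2}{p}+\frac{1}{q}=\frac{1}{r}$ makes the family scale invariant, prove the trivial endpoints $A=(0,0)$ and $B=(0,\frac12)$, the Kenig--Ponce--Vega smoothing endpoint $D=(\frac14,0)$, and Fefferman--Stein type bilinear estimates near $C=(\frac14,\frac14)$ (equivalently on the open diagonal $p=q=3r$, $r>4/3$), then fill in by Stein interpolation of the analytic family $D^{z}W(t)$ together with $[\fL^{r_0}_x,\fL^{r_1}_x]_\theta=\fL^{r_\theta}_x$. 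Your diagnosis of why the corner $C$ itself must be excluded --- the degree $-1$ kernel $|\xi_1\xi_2|^{1/2}/|\xi_1^2-\xi_2^2|$ has a logarithmically divergent Schur integral on the diagonal --- is also correct.

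There is, however, a concrete gap in the interpolation step. Condition (i) is the half-open square $0\le 1/p\le 1/4$, $0\le 1/q<1/4$, which contains the edge $\{1/p=1/4,\ 0<1/q<1/4\}$. Every estimate you actually propose to prove, other than $D$ itself, sits at a point with $1/p<1/4$: the trivial endpoints $A,B$ and the near-$C$ bilinear estimates (which live on the open diagonal $1/p=1/q<1/4$ or slightly inside the region). Any convex combination putting positive weight on a point other than $D$ therefore has $1/p<1/4$, so the only point of the line $1/p=1/4$ reachable by your interpolation is $D$ itself; the segment $\{1/p=1/4,\ 0<1/q<1/4\}$ --- for instance $\|D^{1/4}W(t)f\|_{L^4_tL^8_x}\les \|f\|_{\fL^{8/5}_x}$ --- is not covered, and letting $C_\e\to C$ does not help because the constants blow up. To close this you need a genuinely new input on the line $1/p=1/4$: for example, since $q>4$ there, Minkowski's inequality gives $\|F\|_{L^4_tL^q_x}\le\|F\|_{L^q_xL^4_t}$, and one can then appeal to the reversed-norm ($\fL^q_x\fL^p_t$) smoothing/maximal family that Gr\"unrock establishes alongside, interpolating between Kato smoothing $\|\partial_xW(t)f\|_{L^\infty_xL^2_t}\les\|f\|_{L^2}$ and the maximal function estimate. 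Without some such additional endpoint your argument only yields the open region $1/p<1/4$, which is strictly weaker than the stated lemma (though it does cover everything this paper actually uses, namely \eqref{stri-pq} on the diagonal $p=q=3r$ with $r>4/3$).
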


\begin{remark}
When $p=q=3r$, the conditions reduce to $r>4/3$. This is the famous Fourier restriction estimate due to Fefferman and Stein \cite{Fe-Stein}. 
Moreover, by Lemma \ref{lem:ext} we have for $4/3<r\leq 2$
\EQ{\label{stri-pq}
\left\|D^{\frac{1}{3r}}u\right\|_{L_{t,x}^{3r}}\les \left\|u\right\|_{\FX_r^{0, 1/r+}}.
}
\end{remark}

Let $m$ be a function defined on $\R^2$. 
We use $B_m(f,g)$ to denote the Coifman-Meyer bilinear multiplier
\EQ{\label{coifman}
\mathcal{F}_{x}({B_m(f,g)})(\xi)=\int m(\xi_1, \xi-\xi_1)\widehat{f}(\xi_1)\widehat{g}(\xi-\xi_1)d\xi_1.
}
We recall the bilinear estimates in the Fourier-Lebesgue spaces.
	
\begin{lem}[Bilinear estimates, Lemma 1 \cite{GV}]\label{lem:bilinear-FL}
Let
\begin{align}
m(\xi_1, \xi_2)=\left(|\xi_1+\xi_2||\xi_1-\xi_2|\right)^{1/p}.
\end{align}
Then
\EQ{\label{eq:bilinearStrichartz}
\left\|B_m(W(t)f, W(t)g)\right\|_{\fL^{q}_{x}\fL^{p}_{t}}\lesssim \|f\|_{\fL^{r_1}_{x}}\|g\|_{\fL^{r_2}_{x}}.
}
Moreover, by Lemma \ref{lem:ext}, we have for any $b_{i}>1/r_{i}$
\EQ{ \label{bi-FL}
\|B_m(u, v)\|_{\fL^{q}_{x}\fL^{p}_{t}}\lesssim \|u\|_{\FX_{r_1}^{0, b_{1}}}\|v\|_{\FX_{r_2}^{0, b_{2}}},
}
provided that $1\leq q\leq r_1, r_2 \leq p \leq \infty$ and $1/p+1/q=1/r_1+1/r_2$.
\end{lem}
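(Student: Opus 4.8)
The plan is to reduce \eqref{eq:bilinearStrichartz} to a single bilinear convolution inequality by computing the space-time Fourier transform of $F := B_m(W(t)f, W(t)g)$ and exploiting an exact cancellation between the multiplier $m$ and the Jacobian of the resonance function. Since $\mathcal F_x(W(t)f)(\xi) = e^{it\om(\xi)}\widehat f(\xi)$, taking the Fourier transform in $x$ and $t$ produces
\[
\widehat F(\xi,\tau) = \int_{\R} m(\xi_1,\xi-\xi_1)\,\widehat f(\xi_1)\,\widehat g(\xi-\xi_1)\,\delta\!\big(\tau-h(\xi_1)\big)\,d\xi_1, \qquad h(\xi_1):=\om(\xi_1)+\om(\xi-\xi_1).
\]
For fixed $\xi$ I would resolve the $\delta$-measure using the substitution $u=2\xi_1-\xi=\xi_1-(\xi-\xi_1)$, under which $h=\tfrac14(\xi^3+3\xi u^2)$ and $h'(\xi_1)=3\xi u$; thus each admissible $\tau$ has exactly the two preimages $\pm u$, at both of which $m=(|\xi||u|)^{1/p}$.

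The key step is the change of variables $\tau\mapsto u$ in the inner $L^{p'}_\tau$-norm. Writing out $\widehat F(\xi,\tau)$ as $(|\xi||u|)^{1/p}(3|\xi||u|)^{-1}$ times the symmetrized product $\widehat f(\tfrac{\xi+u}2)\widehat g(\tfrac{\xi-u}2)+\widehat f(\tfrac{\xi-u}2)\widehat g(\tfrac{\xi+u}2)$, and using $|d\tau/du|=\tfrac32|\xi|u$, the weight $(|\xi||u|)^{(1/p-1)p'}$ contributed by the multiplier meets the Jacobian $|\xi|u$. The algebraic miracle is the identity $(\tfrac1p-1)p'=-1$, which makes these factors cancel \emph{exactly}, leaving no residual weight:
\[
\|\widehat F(\xi,\cdot)\|_{L^{p'}_\tau}\;\lesssim\;\Big\|\,\widehat f(\tfrac{\xi+u}2)\,\widehat g(\tfrac{\xi-u}2)\,\Big\|_{L^{p'}_u(\R)},
\]
where the two symmetric terms are recombined into a single integral over all of $\R$. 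This cancellation is precisely what forces the specific exponent $1/p$ in the definition of $m$.

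Next I would take the $L^{q'}_\xi$-norm and substitute $a=\tfrac{\xi+u}2$, $b=\tfrac{\xi-u}2$. For fixed $\xi$ the inner integral is a convolution, $\big\|\widehat f(\tfrac{\xi+\cdot}2)\widehat g(\tfrac{\xi-\cdot}2)\big\|_{L^{p'}_u}^{p'}\sim(|\widehat f|^{p'}*|\widehat g|^{p'})(\xi)$, so that $\|F\|_{\fL^q_x\fL^p_t}\sim\big\||\widehat f|^{p'}*|\widehat g|^{p'}\big\|_{L^{q'/p'}_\xi}^{1/p'}$. Young's inequality with exponents $\alpha=r_1'/p'$ and $\beta=r_2'/p'$ then yields $\|\widehat f\|_{L^{r_1'}}\|\widehat g\|_{L^{r_2'}}=\|f\|_{\fL^{r_1}_x}\|g\|_{\fL^{r_2}_x}$. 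Here the two hypotheses of the lemma play complementary roles: Young's balance condition $\tfrac1\alpha+\tfrac1\beta=1+\tfrac{p'}{q'}$ is, after dividing by $p'$, identical to the scaling relation $\tfrac1p+\tfrac1q=\tfrac1{r_1}+\tfrac1{r_2}$, while the admissibility $\alpha,\beta,q'/p'\in[1,\infty]$ is guaranteed by the range $1\le q\le r_1,r_2\le p\le\infty$. Finally, \eqref{bi-FL} follows from \eqref{eq:bilinearStrichartz} by the Transference Principle (Lemma~\ref{lem:ext}) applied with $Z=\fL^q_x\fL^p_t$; the required time-modulation estimate holds because the modulations arising in the transference are of the form $g(t)=e^{it\lambda}$, whose action merely translates $\widehat F$ in $\tau$ and hence preserves the $L^{p'}_\tau$-norm.

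The bulk of the work is bookkeeping rather than analysis: the two coincidences just described—the multiplier-Jacobian cancellation $(\tfrac1p-1)p'=-1$ and the matching of Young's condition with the scaling relation—are the genuine content, and once they are isolated the estimate is essentially forced. I expect the only delicate point to be making the $\delta$-measure/coarea manipulation rigorous (e.g.\ by interpreting $\widehat F$ as the pushforward of an absolutely continuous measure, or by a limiting argument), together with the harmless degenerate locus $\xi=0$ where $h'$ vanishes but which is $L^{q'}_\xi$-null.
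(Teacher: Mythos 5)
The paper gives no proof of this lemma; it simply cites Lemma~1 of \cite{GV}, and your argument is in substance the standard proof given there: compute the space-time Fourier transform of the free bilinear expression, resolve the resulting $\delta(\tau-h(\xi_1))$ by the substitution $u=2\xi_1-\xi$, observe the exact cancellation $(\tfrac1p-1)p'=-1$ between the multiplier $(|\xi||u|)^{1/p}$ and the Jacobian $|h'|=3|\xi||u|$, and close with Young's convolution inequality, whose balance condition reproduces $\tfrac1p+\tfrac1q=\tfrac1{r_1}+\tfrac1{r_2}$. Your computations check out (including the two-to-one resolution of the level sets, the symmetrization in $u\mapsto-u$, and the admissibility bookkeeping $q\le r_1,r_2\le p$), and your remark that the transference step only needs modulations $e^{it\lambda}$, which act by translation in $\tau$ and hence isometrically on $\fL^q_x\fL^p_t$, correctly addresses the one point where the literal hypothesis of Lemma~\ref{lem:ext} (an $L^\infty_t$ modulation bound) would not hold for this $Z$. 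No gaps beyond the routine rigorization of the $\delta$-measure manipulation, which you flag yourself.
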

	
The following corollary follows from Lemma \ref{lem:bilinear-FL}.
\begin{cor} \label{cor:bi1}
{\rm (1)} Let $N_1, N_2\geq 1$ be dyadic numbers and $N_1\gg N_2$. Then for $b>1/r$
\begin{equation}\label{cor:dy-bi}
\left\|P_{N_1}uP_{N_2}v\right\|_{{\fL^r_{t,x}}}\lesssim N_1^{-2/r}\left\|P_{N_1}u\right\|_{\FX_r^{0,b}}\left\|P_{N_2}v\right\|_{\FX_r^{0,b}}.
\end{equation}
{\rm (2)} Suppose that $k_1, k_2 \in \mathbb{Z}$ and $|k_1+k_2|, |k_1-k_2|\geq 2$. Then for $b>1/r$
\begin{equation} \label{cor:mo-bi}
\left\|\Box_{k_1}u\Box_{k_2}v\right\|_{{\fL^r_{t,x}}}\lesssim B(k_1,k_2)\left\|\Box_{k_1}u\right\|_{\FX_r^{0,b}}\left\|\Box_{k_2}v\right\|_{\FX_r^{0,b}},
\end{equation}
where $B(k_1,k_2)=(|k_1+k_2||k_1-k_2|)^{-1/r}$.
\end{cor}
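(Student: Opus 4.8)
The plan is to specialize the bilinear multiplier estimate \eqref{bi-FL} of Lemma~\ref{lem:bilinear-FL} and then to remove the multiplier $m$ by exploiting the frequency separation. Taking $p=q=r_1=r_2=r$ in Lemma~\ref{lem:bilinear-FL} (all the constraints $1\le q\le r_1,r_2\le p$ and $1/p+1/q=1/r_1+1/r_2$ are then met), we obtain for $b>1/r$
\begin{equation*}
\left\|B_m(u,v)\right\|_{\fL^r_{t,x}}\les \|u\|_{\FX_r^{0,b}}\|v\|_{\FX_r^{0,b}},\qquad m(\xi_1,\xi_2)=\left(|\xi_1+\xi_2||\xi_1-\xi_2|\right)^{1/r}.
\end{equation*}
Both assertions follow once the plain product of the localized inputs is rewritten as $B_m$ applied to suitably modulated inputs, up to the scalar factors $N_1^{-2/r}$ and $B(k_1,k_2)$ coming from the size of $m$ on the relevant frequency supports.

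For part (1), since $N_1\gg N_2$, on the Fourier support of $(P_{N_1}u,P_{N_2}v)$ we have $|\xi_1|\sim N_1$, $|\xi_2|\sim N_2$, hence $|\xi_1\pm\xi_2|\sim N_1$ and $m(\xi_1,\xi_2)\sim N_1^{2/r}$ with $1/m$ smooth there. I would introduce the symbol $\sigma=\Phi/m$, where $\Phi$ is a cutoff equal to $1$ on this support region, and expand the normalized symbol $N_1^{2/r}\sigma(N_1\,\cdot\,,N_2\,\cdot\,)$ -- a Coifman--Meyer symbol on the unit box, with all derivatives bounded uniformly in $N_1,N_2$ -- in an absolutely convergent Fourier series $\sum_{n,l}c_{n,l}e^{i(n\eta_1+l\eta_2)}$. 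Undoing the rescaling gives
\begin{equation*}
P_{N_1}u\cdot P_{N_2}v=N_1^{-2/r}\sum_{n,l\in\Z}c_{n,l}\,B_m\big(P_{N_1}u(\cdot-\tfrac{n}{N_1}),\,P_{N_2}v(\cdot-\tfrac{l}{N_2})\big),
\end{equation*}
where each frequency modulation is realized as a spatial translation, which preserves the $\FX_r^{0,b}$ norm. Applying the displayed bilinear estimate to each term and summing $\sum_{n,l}|c_{n,l}|\les 1$ yields \eqref{cor:dy-bi}.

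Part (2) is structurally identical, with the Littlewood--Paley pieces replaced by the unit-scale boxes $\Box_{k_1},\Box_{k_2}$: the hypotheses $|k_1+k_2|,|k_1-k_2|\ge 2$ make $m(\xi_1,\xi_2)\sim(|k_1+k_2||k_1-k_2|)^{1/r}=B(k_1,k_2)^{-1}$ with $1/m$ smooth at unit scale on the support, so expanding $B(k_1,k_2)^{-1}\sigma$ in a unit-box Fourier series and arguing as above (now with unit-scale translations) gives \eqref{cor:mo-bi}. The main obstacle is verifying that the Fourier coefficients $c_{n,l}$ are absolutely summable \emph{uniformly} in $(N_1,N_2)$, respectively $(k_1,k_2)$; this is exactly where the separation hypotheses enter, since they are what make the rescaled reciprocal symbol smooth with bounds independent of the frequency parameters. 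The only genuinely delicate point in part (2) is the handful of near-degenerate configurations where $|k_1\pm k_2|$ is $O(1)$, so that $m$ may approach its zero set and $1/m$ fails to be smooth on the whole box; there, however, $B(k_1,k_2)\sim 1$ and all frequencies are bounded, so the claim reduces to an elementary bounded-frequency product estimate and can be dispatched directly.
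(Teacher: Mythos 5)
Your argument for the generic configurations is correct, but it takes a detour the paper does not need. Since every norm in sight ($\fL^r_{t,x}$ on the left, $\FX_r^{0,b}$ on the right) depends only on the modulus of the space--time Fourier transform, there is no need to invert the symbol via a Coifman--Meyer/Fourier-series expansion: writing $\wt u$ for the function with $\wh{\wt u}=|\wh u|$ (as the paper does in Section 4), one has the pointwise bound
\[
\bigl|\mathcal F(P_{N_1}u\,P_{N_2}v)\bigr|\;\le\;\wh{\wt{P_{N_1}u}}*\wh{\wt{P_{N_2}v}}\;\lesssim\; N_1^{-2/r}\,\mathcal F\bigl(B_m(\wt{P_{N_1}u},\wt{P_{N_2}v})\bigr),
\]
because $m=(|\xi_1+\xi_2||\xi_1-\xi_2|)^{1/r}\gtrsim N_1^{2/r}$ everywhere on the product of the supports; taking $L^{r'}_{\tau,\xi}$ norms and applying \eqref{bi-FL} with $p=q=r_1=r_2=r$ gives \eqref{cor:dy-bi} in two lines. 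Only a pointwise lower bound on $m$ is used, never smoothness of $1/m$, so the uniform summability of your coefficients $c_{n,l}$ --- the point you correctly identify as the crux of your approach --- is a difficulty you have created for yourself. Your expansion is the right tool in genuinely $L^p$-based spaces, and it does succeed here for part (1) and for well-separated boxes in part (2), but in $\fL^r$-type spaces it is superfluous.

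The genuine gap is your treatment of the near-degenerate configurations in part (2), and your proposed patch is wrong. With $\supp\eta\subset[-5/4,5/4]$, the hypothesis $|k_1+k_2|\ge 2$ still allows $\xi_1+\xi_2$ to range over an interval containing $0$ (e.g.\ $|k_1+k_2|=2$ gives $\xi_1+\xi_2\in[-1/2,9/2]$), so $m$ vanishes on the support and neither $m\gtrsim B(k_1,k_2)^{-1}$ nor smoothness of $1/m$ holds. Contrary to your claim, this happens with $|k_1-k_2|$ arbitrarily large (take $k_1=K+1$, $k_2=-K+1$), in which case $B(k_1,k_2)\sim K^{-1/r}$ is small and the frequencies are unbounded, so the case does not reduce to a bounded-frequency product estimate. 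Indeed the stated constant is not attainable there: with $\wh f=\chi_{[K,K+\delta]}$, $\wh g=\chi_{[-K,-K+\delta]}$ and $\delta\sim K^{-1/2}$, the resonance $\xi_1^3+\xi_2^3-3K^2(\xi_1+\xi_2)=O(K\delta^2)=O(1)$ forces $|\mathcal F(\varphi(t)^2W(t)f\,W(t)g)|\gtrsim\delta$ on a set of measure $\sim\delta$, whence the left-hand side of \eqref{cor:mo-bi} is $\gtrsim\delta^{2-1/r}=K^{1/(2r)}\cdot K^{-1/r}\delta^{2/r'}$, exceeding the claimed right-hand side $\sim K^{-1/r}\delta^{2/r'}$ by an unbounded factor. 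The corollary must therefore be read with the implicit proviso that $|\xi_1\pm\xi_2|\sim|k_1\pm k_2|$ actually holds on the relevant supports (e.g.\ $|k_1\pm k_2|\ge 3$, or with the additional output localization $P_K$ present wherever the estimate is invoked in Section 4); your write-up should state this restriction rather than assert that the borderline cases are elementary.
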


The bilinear Strichartz estimate \eqref{eq:bilinearStrichartz} is an equality when $q=p=r_1=r_2=r$.  However, after a temporal localization, we can have an improvement by some decoupling properties. This was first observed in \cite{GH} in the case $r=2$. We extend this to general case $r>1$.  This is a key ingredient in the proof of Proposition \ref{prop:tri}. 
\begin{lem}[Improved bilinear estimate]\label{lem:improve-bi}
Assume $1\leq N_3\ll N_1\sim N_2$ and  \EQ{m_\pm(\xi_1,\xi_2)=\chi_{N_3}(\xi_1\pm \xi_2)\chi_{N_1}(\xi_1)\chi_{N_2}(\xi_2).}
Then for $2r'\leq q\leq \infty$ and $b>1/r$ we have
\EQ{\label{improve-bi}
&\left\|B_{m_-}(u, v)\right\|_{{\fL^r_{t,x}}}+\left\|{\varphi}(t)B_{m_+}(u, v)\right\|_{{\fL^r_{t,x}}}\\
\les& N_1^{-1/r}N_1^{1/r^{\prime}-2/q}N_3^{1/r^{\prime}-1/r}\left\|u\right\|_{\FX^{0,b}_{r,q}}\left\|v\right\|_{\FX^{0,b}_{r,q}}.
}
\end{lem}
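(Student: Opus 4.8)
The plan is to reduce the claimed bilinear bound to the basic Coifman--Meyer estimate \eqref{eq:bilinearStrichartz} (equivalently \eqref{bi-FL} after transference), and then to recover the extra gain beyond the naive application by exploiting a finer box decomposition inside the big frequency blocks $N_1\sim N_2$ together with an orthogonality/decoupling argument in the $\ell^q_k$ summation. First I would dyadically fix the geometry: on the support of $m_\pm$ we have $|\xi_1|\sim|\xi_2|\sim N_1$ and $|\xi_1\pm\xi_2|\lesssim N_3$, so the Coifman--Meyer symbol $(|\xi_1+\xi_2||\xi_1-\xi_2|)^{1/r}$ appearing in Lemma~\ref{lem:bilinear-FL} is comparable to $(N_1 N_3)^{1/r}$ on this region (in the $m_-$ case $|\xi_1-\xi_2|\lesssim N_3$ and $|\xi_1+\xi_2|\sim N_1$; in the $m_+$ case the roles of the sum and difference are swapped). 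Applying \eqref{bi-FL} with $p=q=r_1=r_2=r$ to $B_m$ built from the full symbol and then dividing out this multiplier produces a factor $(N_1N_3)^{-1/r}$. This already yields the $N_1^{-1/r}N_3^{-1/r}$ part of the target; what remains is to produce the favourable power $N_1^{1/r'-2/q}N_3^{1/r'}$ rather than the lossy powers one gets by summing naively.

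The key step, and the genuinely new content, is the decoupling in the $\ell^q_k$ sum over the unit-box indices. I would decompose each input as $u=\sum_{k_1}\Box_{k_1}u$ and $v=\sum_{k_2}\Box_{k_2}v$, where $k_1$ ranges over $\sim N_1$ unit boxes covering $\{|\xi_1|\sim N_1\}$ and similarly for $k_2$. The crucial observation is that the constraint $|\xi_1\pm\xi_2|\lesssim N_3$ forces $|k_1\pm k_2|\lesssim N_3$, so for each fixed $k_1$ there are only $O(N_3)$ admissible $k_2$ (and vice versa); this is the decoupling that the remark after Proposition~\ref{prop:tri} flags as the obstacle, namely beating the trivial $\ell^1*\ell^1\to \ell^q$ loss. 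For each admissible pair I would apply the unit-box bilinear bound --- essentially Corollary~\ref{cor:bi1}(2), or equivalently \eqref{bi-FL} at the unit scale where the symbol is $O(1)$ --- to get a clean estimate $\|\Box_{k_1}u\,\Box_{k_2}v\|_{\fL^r_{t,x}}\lesssim \|\Box_{k_1}u\|_{\FX^{0,b}_r}\|\Box_{k_2}v\|_{\FX^{0,b}_r}$. The output pieces $B_{m_\pm}(\Box_{k_1}u,\Box_{k_2}v)$ have frequency support in boxes of size $O(N_3)$ localized near $\xi_1\pm\xi_2$, so when reassembling the product in $\fL^r_{t,x}=\FX^{0,0}_r$ one has only $O(N_3)$ overlapping output boxes in the relevant region; this bounded overlap is what converts the $\ell^r$ reassembly into a controlled sum and supplies the factors of $N_3$ and $N_1$ in the stated exponents.

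To organize the summation cleanly I expect to argue as follows. Write the left-hand side as an $\fL^r_{t,x}$ norm of a sum over output boxes; by the almost-orthogonality of these $O(N_3)$-supported boxes together with the finite overlap, pass to an $\ell^{r'}$ (or $\ell^2$, via Hausdorff--Young at exponent $r'\le 2$) sum over the output box index of the individual $\fL^r$ norms, paying at most $N_3^{1/r'-1/r}$ for the overlap count and gaining the corresponding decoupling. Then for each output box, estimate the contributing bilinear pieces and sum the pair index $(k_1,k_2)$ via Hölder in $\ell^q$: since for fixed $k_1$ only $O(N_3)$ values of $k_2$ contribute and the total number of occupied $k_1$ boxes is $O(N_1)$, Hölder's inequality in $\ell^q_{k}$ against the admissible-pair counting measure yields exactly the volume factors $N_1^{1/r'-2/q}$ (this is where the hypothesis $2r'\le q$ enters, guaranteeing the relevant Hölder exponent is admissible and the $2/q$ exponent is nonnegative) and $N_3^{1/r'}$. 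Combining with the symbol factor $(N_1N_3)^{-1/r}$ from the first step gives the asserted bound. The main obstacle is precisely this bookkeeping of the decoupling: one must simultaneously (i) use the $|k_1\pm k_2|\lesssim N_3$ constraint to limit the number of interacting pairs, (ii) use the bounded overlap of the $O(N_3)$-size output boxes, and (iii) allocate the $\ell^q$ powers between the two inputs so that the final exponents match; getting all three to conspire into $N_1^{1/r'-2/q}N_3^{1/r'-1/r}$, rather than a weaker power, is the delicate part, and is the reason the hypothesis $q\ge 2r'$ (not merely $q\ge r'$) is required.
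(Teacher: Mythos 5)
Your treatment of $B_{m_-}$ is essentially the paper's argument: decompose into frequency blocks of length $N_3$, use that the outputs are (almost) disjoint in the output frequency $\xi_1+\xi_2$ to pass to an $\ell^{r'}$ sum of the pieces, apply Lemma~\ref{lem:bilinear-FL} on each piece to extract $(N_1N_3)^{-1/r}$, and then H\"older twice — over the $O(N_1/N_3)$ block indices at cost $(N_1/N_3)^{1/r'-2/q}$ (this is where $q\ge 2r'$ enters, exactly as you say) and within each $N_3$-block at cost $N_3^{2(1/r'-1/q)}$ — to reach the $\ell^q_k$ norms. One caveat: you should extract the symbol factor by applying the bilinear estimate \emph{to each localized piece} (where the inputs are already confined to $N_3$-intervals), not globally first and then decomposing afterwards; also, your alternative unit-box route via Corollary~\ref{cor:bi1}(2) does not give a uniform $(N_1N_3)^{-1/r}$, since $B(k_1,k_2)=(|k_1+k_2||k_1-k_2|)^{-1/r}$ degenerates to $N_1^{-1/r}$ when $|k_1-k_2|=O(1)$. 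These are fixable bookkeeping issues.

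The genuine gap is in the $B_{m_+}$ case. There the output frequency is $\xi_1+\xi_2$ with $|\xi_1+\xi_2|\lesssim N_3$, so \emph{every} one of the $O(N_1/N_3)$ interacting pairs $(\Box^{N_3}_{j}u,\Box^{N_3}_{-j}v)$ produces output supported in the \emph{same} interval $\{|\xi|\lesssim N_3\}$: there is no finite overlap in the spatial frequency, and your "only $O(N_3)$ overlapping output boxes" argument does not apply. The almost orthogonality that rescues this case lives in the modulation variable: for free solutions the piece indexed by $j$ is supported in $E_j=\{|\xi|\sim N_3,\ |\tau-3|jN_3|^2\xi|\lesssim N_1N_3^2\}$, and these sets are finitely overlapping in $\tau$ precisely because $|jN_3|\sim N_1$ separates the slopes $3|jN_3|^2$. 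Making this rigorous requires the temporal cutoff $\varphi(t)$ (which is why it appears only in front of the $B_{m_+}$ term in the statement) followed by the transference principle to pass from $W(t)f$ to general $u\in\FX^{0,b}_{r,q}$. Your proposal never invokes $\varphi(t)$ or any $\tau$-separation, so as written the $m_+$ half of the estimate is not proved.
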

\begin{proof}
First, we consider the symbol $m_-$. We decompose $m_-$ as
\EQN{
m_-(\xi_1,\xi_2)=&\sum_j\sum_{j_1,j_2\in \Z} \chi_{N_3}(\xi_1-\xi_2)\chi_{N_1}(\xi_1)\chi_{N_2}(\xi_2)\\
& \cdot\chi (\frac{j_1N_3+\xi_1}{N_3})\chi (\frac{j_2N_3+\xi_2}{N_3})\chi (\frac{jN_3+\xi_1+\xi_2}{N_3})\\
=&\sum_j m_-^j(\xi_1,\xi_2).
}
By the support properties of the functions, we know that in the above summations $|j_1-j_2|\leq 3$ and $|j-j_1-j_2|\leq 5$, $j$ takes $O(N_1/N_3)$ values. Hence for the fixed $j$, we have $|j_i-j/2|\leq 10$, $i=1,2$. 
For simplicity, we assume $j_1=j_2=j/2$.
Then for $q\geq 2r'$ we have
\EQN{
&\left\|B_{m_-}(W(t)f, W(t)g)\right\|_{{\fL^r_{t,x}}}\\
\les &\left\|\mathcal{F}(B_{m^j_-}(W(t)f, W(t)g))\right\|_{l_j^{r^{\prime}} L^{r^{\prime}}_{\tau,\xi}}\\
\les & N_1^{-1/r}N_3^{-1/r}\normo{\|\chi (\frac{jN_3/2+\xi}{N_3})\widehat{f}\|_{L_{\xi}^{r^{\prime}}}\|\chi (\frac{jN_3/2+\xi}{N_3})\widehat{ g}\|_{L_{\xi}^{r^{\prime}}}}_{l_j^{r^{\prime}}}\\
\les & N_1^{-1/r}N_3^{-1/r}(N_1/N_3)^{1/{r^{\prime}}-2/q}\normo{\|\chi (\frac{jN_3/2+\xi}{N_3})\widehat{f}\|_{L_{\xi}^{r^{\prime}}}\|\chi (\frac{jN_3/2+\xi}{N_3})\widehat{ g}\|_{L_{\xi}^{r^{\prime}}}}_{l_j^{q/2}}\\
\les & N_1^{-1/r}N_3^{-1/r}(N_1/N_3)^{1/{r^{\prime}}-2/q}N_3^{2(1/{r^{\prime}}-1/q)}\|\widehat{\Box_k f}\|_{l_k^q L_{\xi}^{r^{\prime}}}\|\widehat{\Box_k g}\|_{l_k^q L_{\xi}^{r^{\prime}}}\\
\les & N_1^{-1/r}N_1^{1/{r^{\prime}}-2/q}N_3^{1/{r^{\prime}}-1/r}\|\widehat{\Box_k f}\|_{l_k^q L_{\xi}^{r^{\prime}}}\|\widehat{\Box_k g}\|_{l_k^q L_{\xi}^{r^{\prime}}},
}
where we used Lemma \ref{lem:bilinear-FL} in the second step. 
Thus we have \eqref{improve-bi} as desired by Lemma \ref{lem:ext}.
		
Now we consider the symbol $m_+$. We decompose the high frequency into the intervals with length $N_3$
\EQN{
&{\varphi}(\tau)\mathcal{F}(B_{m_+}(W(t) f, W(t)g))\\
=&{\varphi}(\tau) \mathcal{F}(P_{N_3}(W(t)P_{N_1}f\cdot W(t)P_{N_2}g))\\
=&\sum_{|j_1+j_2|\leq 3}{\varphi}(\tau)\mathcal{F}(P_{N_3}(W(t)P_{N_1}\Box^{N_3}_{j_1}f\cdot W(t)P_{N_2}\Box^{N_3}_{j_2}g)).
}
Without loss of generality, we may assume $j_1=-j_2$. The key observation is the following almost orthogonality property
\EQ{\label{eq:improvepf1}
&\left\|\sum_j {\varphi}(\tau)\mathcal{F}(P_{N_3}(W(t)P_{N_1}\Box^{N_3}_{j}f\cdot W(t)P_{N_2}\Box^{N_3}_{-j}g))\right\|_{L^{r^{\prime}}_{\tau, \xi}}\\
\les& \left\|{\varphi}(\tau)\mathcal{F}(P_{N_3}(W(t)P_{N_1}\Box^{N_3}_{j}f\cdot W(t)P_{N_2}\Box^{N_3}_{-j}g))\right\|_{l_j^{r^{\prime}}L^{r^{\prime}}_{\tau, \xi}}.
}
Assuming \eqref{eq:improvepf1}, then we have the desired estimates as the case $m_-$.
To show \eqref{eq:improvepf1}, one only needs to note that ${\varphi}(\tau)\mathcal{F}[P_{N_3}(W(t)P_{N_1}\Box^{N_3}_{j}f\cdot W(t)P_{N_2}\Box^{N_3}_{-j}g)]$ is supported in the set $E_j$ where
\EQN{
E_j=\{(\tau,\xi): |\xi|\sim N_3, |\tau-3|j N_3|^{2}\xi|\les N_1 N_3^2, |jN_3|\sim N_1\}.
}
It's easy to verify that $\{E_j\}$ is finitely overlapping. 
\end{proof}

\begin{remark}
Lemma \ref{lem:bilinear-FL} implies that for $r' \leq q \leq \infty$
\EQ{
&\|B_{m_-}(u, v)\|_{{\fL^r_{t,x}}}+\|{\varphi}(t)B_{m_+}(u, v)\|_{{\fL^r_{t,x}}}\\
\les&N_1^{-1/r}N_3^{-1/r}N_1^{2/{r'}-2/q}\|u\|_{\FX^{0,b}_{r,q}}\|v\|_{\FX^{0,b}_{r,q}}\\
=& N_1^{-1/r}N_1^{1/{r'}-2/q}N_3^{1/r'-1/r} (N_1^{1/{r'}}N_3^{-1/{r'}})\|u\|_{\FX^{0,b}_{r,q}}\|v\|_{\FX^{0,b}_{r,q}}.
}
However, by Lemma \ref{lem:improve-bi} we gain a factor $N_1^{1/{r'}}N_3^{-1/{r'}}$ for $q\geq 2r'$. This will be crucial for us to handle the case $N_3\ll N_1\sim N_2$.
    
\end{remark}
 
Lemma \ref{lem:improve-bi} with $r=2$ can imply the improved $L^4$-estimates (see \cite{GH})
\EQ{\label{L4-1}
\norm{\varphi(t)P_Nu}_{L^4_{t,x}}\les N^{-1/8+} \norm{u}_{\FX^{0, 1/2+}_{2,4}}.
}
Taking $r=4/3+$ in \eqref{stri-pq} and using H\"older inequality, we have
\begin{align}\label{L4-estimate3}
\|\varphi(t) P_N u\|_{L^4_{t,x}}\les N^{-1/4+}\|u\|_{\FX_{4/3, 4}^{0, 3/4+}}.
\end{align}
By the interpolation of \eqref{L4-estimate3} with \eqref{L4-1} we obtain for $4/3< r \leq 2$
\EQ{\label{L4-2}
\norm{\varphi(t)P_Nu}_{L^4_{t,x}}\les N^{-(\frac{1}{2r}-\frac{1}{8})+}  \norm{u}_{\FX^{0,1/r+}_{r, 4}}.
}
The above estimate is useful to handle the resonant case. For $1<r\leq 4/3$, we will use the trilinear estimates in \cite{GV}. 

Let $\sigma$ be a general function on $\mathbb{R}^{3}$. Define the trilinear multiplier by
\EQN{
\mathcal{F}_{x}({T_\sigma (f_1, f_2, f_3)})(\xi)=\iint \sigma (\xi_1, \xi_2, \xi-\xi_1-\xi_2)\widehat{f_1}(\xi_1)\widehat{f_2}(\xi_2)\widehat{f_3}(\xi-\xi_1-\xi_2)d\xi_1d\xi_2.
}

\begin{lem}[Trilinear estimates]\label{lem:trilinear-FL}

(a) (Corollary 5 \cite{GV}): For $1\leq r<\infty$, $b>1/r$ and $\varepsilon>0$, we have
\EQ{
\normo{T_\sigma (u_1, u_2, u_3)}_{\fL_{t,x}^{r}}\lesssim \|u_1\|_{\FX^{\varepsilon, b}_{r}}\normo{u_2}_{\FX^{-\frac{1}{2r}, b}_{r}}\|u_3\|_{\FX^{-\frac{1}{2r}, b}_{r}}
}
where
\begin{align}
\sigma (\xi_1, \xi_2, \xi-\xi_1-\xi_2)=\chi\{|\xi-\xi_1|\leq |\xi_1+2\xi_2-\xi|\}.
\end{align}

(b) (Corollary 4 \cite{GV}): For $1<r<2$, $b>1/r$, we have
\EQ{\label{eq:trilinearStrichartz}
\normo{T_\sigma (u_1, u_2, u_3)}_{\fL_{t,x}^{r}}\lesssim \|u_1\|_{\FX^{-\frac{1}{3r}, b}_{r}}\normo{u_2}_{\FX^{-\frac{1}{3r}, b}_{r}}\|u_3\|_{\FX^{-\frac{1}{3r}, b}_{r}}
}
where
\begin{align}
\sigma (\xi_1, \xi_2, \xi-\xi_1-\xi_2)=\chi\{|\xi-\xi_1|\geq |\xi_1+2\xi_2-\xi|\}.
\end{align}
\end{lem}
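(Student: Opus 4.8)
The statement is quoted from \cite{GV} as their Corollaries 4 and 5, so my plan is to reconstruct the Fourier restriction argument behind it. Both bounds are trilinear restriction estimates governed by the mKdV resonance identity
\[
\tau - \xi^3 = -3(\xi_1+\xi_2)(\xi_2+\xi_3)(\xi_1+\xi_3), \qquad \xi = \xi_1+\xi_2+\xi_3,\ \ \tau = \xi_1^3+\xi_2^3+\xi_3^3,
\]
so the first move is to pass to free solutions. Absorbing the weights $\jb{\xi_j}^{s_j}$ into $f_j$ (replace $\sigma$ by $\sigma\prod_j\jb{\xi_j}^{-s_j}$) and applying the transference principle, Lemma \ref{lem:ext}, with $Z=\fL^r_{t,x}$, it suffices to prove for $b>1/r$ the free-evolution bound
\[
\normo{T_\sigma\big(W(t)f_1, W(t)f_2, W(t)f_3\big)}_{\fL^r_{t,x}} \les \prod_{j=1}^3 \norm{\jb{\xi}^{s_j}\wh{f_j}}_{L^{r'}_\xi},
\]
with $(s_1,s_2,s_3)=(\varepsilon,-\tfrac1{2r},-\tfrac1{2r})$ in case (a) and $(-\tfrac1{3r},-\tfrac1{3r},-\tfrac1{3r})$ in case (b).

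For free solutions the space-time Fourier transform collapses onto the cubic curve: writing $g_j=\wh{f_j}$ and $\xi_3=\xi-\xi_1-\xi_2$,
\[
\wh{T_\sigma(W(t) f_1, W(t) f_2, W(t) f_3)}(\xi,\tau) = \iint \sigma\, g_1(\xi_1) g_2(\xi_2) g_3(\xi_3)\, \delta\big(\tau - \xi_1^3 - \xi_2^3 - \xi_3^3\big)\, d\xi_1\, d\xi_2,
\]
and the task reduces to the $L^{r'}_{\tau,\xi}$ estimate of this convolution. The plan is to change variables, for fixed $\xi$, from $(\xi_1,\xi_2)$ to coordinates adapted to the resonance: take $d:=\tau-\xi^3$ (which equals $-3(\xi_1+\xi_2)(\xi_2+\xi_3)(\xi_1+\xi_3)$ by the identity above) as one coordinate and a linear coordinate along the fiber as the other. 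The coarea/Jacobian factor is controlled by $|\nabla_{\xi_1,\xi_2}(\xi_1^3+\xi_2^3+\xi_3^3)|\sim|(\xi_1-\xi_3)(\xi_1+\xi_3)|+|(\xi_2-\xi_3)(\xi_2+\xi_3)|$, a combination of the resonance factors; this is precisely what produces the negative Sobolev gains after one applies H\"older on the fiber and then in $(\xi,d)$.

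The cut-offs in $\sigma$ serve to order these factors. Since $\xi-\xi_1=\xi_2+\xi_3$ and $\xi_1+2\xi_2-\xi=\xi_2-\xi_3$, case (a) is the region $\xi_2\xi_3\le 0$ (opposite signs, so the factor $\xi_2+\xi_3$ is dominated by $|\xi_2-\xi_3|$ and $\xi_1$ sits near the output $\xi$), while case (b) is $\xi_2\xi_3\ge 0$. In case (a), away from the degeneracy the required smoothing is essentially a bilinear restriction gain on the transversal pair $(u_2,u_3)$ (in the spirit of Lemma \ref{lem:bilinear-FL}), giving the two factors $\jb{\xi_2}^{-1/(2r)}\jb{\xi_3}^{-1/(2r)}$ with $u_1$ near the output needing only the harmless $\varepsilon$; in case (b) the interaction is genuinely trilinear and the gain is distributed symmetrically as $\jb{\xi_j}^{-1/(3r)}$. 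In each regime one then concludes by H\"older in $L^{r'}$, converting the resonance size furnished by the Jacobian into the stated weights.

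The hard part will be the degenerate region where one of the pairwise sums $\xi_i+\xi_j$ (equivalently $\nabla\phi$) nearly vanishes, most dangerously $\xi_2\approx-\xi_3$ in case (a), where $\xi_1\approx\xi$: there the bilinear bound of Lemma \ref{lem:bilinear-FL} degenerates and a naive estimate loses the sharp exponent, so one must localize with a Whitney-type decomposition in the small factor and fall back on the linear Strichartz smoothing of $u_1$ — which is exactly where the $\varepsilon$ loss appears. Because we work in the non-Hilbertian space $\fL^r_{t,x}$ (no Plancherel and no $L^2$ orthogonality), every step must go through H\"older in $L^{r'}$ and explicit Jacobian bookkeeping rather than square-function arguments, and obtaining the endpoint powers $-1/(2r)$ and $-1/(3r)$ sharply, uniformly as $r\to 1$ in case (b), is the delicate point.
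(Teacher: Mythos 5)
The paper does not actually prove this lemma: it is imported verbatim from Gr\"unrock--Vega \cite{GV} (their Corollaries 4 and 5), so there is no internal proof to compare against. Your reconstruction does identify the correct architecture of the argument in \cite{GV}: transference to free solutions, restriction of the space-time Fourier transform to the surface $\tau=\xi_1^3+\xi_2^3+\xi_3^3$, a change of variables whose Jacobian is built from the resonance factors $(\xi_i+\xi_j)$, and H\"older in $L^{r'}$. Your identification of the cutoffs with the sign condition $\xi_2\xi_3\le 0$ versus $\xi_2\xi_3\ge 0$, and of the fiber derivative $\partial_{\xi_2}(\xi_1^3+\xi_2^3+\xi_3^3)=3(\xi_2-\xi_3)(\xi_2+\xi_3)$, is also correct.

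As a proof, however, the proposal has a genuine gap: the decisive estimates are announced but never carried out. Concretely, (i) the H\"older/change-of-variables computation that converts the Jacobian $|(\xi_2-\xi_3)(\xi_2+\xi_3)|$ into precisely the weights $\jb{\xi_2}^{-1/(2r)}\jb{\xi_3}^{-1/(2r)}$ in case (a) and $\prod_j\jb{\xi_j}^{-1/(3r)}$ in case (b) is the entire content of the underlying theorems in \cite{GV} and is only gestured at; (ii) the degenerate region $\xi_2\approx-\xi_3$, which you correctly flag as the source of the $\varepsilon$-loss in (a), is deferred to an unspecified Whitney-type decomposition plus linear Strichartz smoothing, which is exactly the part that cannot be taken for granted in the non-Hilbertian setting; and (iii) the sketch gives no account of why case (b) requires $1<r<2$ while case (a) holds for all $1\le r<\infty$ --- if the argument were as symmetric as described the ranges would coincide, so a correct execution must locate where the integrability constraint on $r$ enters (it arises from the summation over branches of the inverse change of variables and the H\"older exponents on the fiber). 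Until these steps are supplied, what you have is a plausible plan for reproving \cite{GV}, not a proof.
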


\section{Proof of Proposition \ref{prop:tri}}

In this section we prove Proposition \ref{prop:tri}. 
Fix $r\in (1,2]$ and $0<\e< \frac{(r-1)^2}{2^{10} r^2}$. By duality, to prove \eqref{tri_1} it is equivalent to show
\begin{align}\label{tri7}
\left|\int \wh{\varphi}^4(t) u_1u_2u_3v\, dxdt\right|&\lesssim \prod^3_{j=1}\|u_j\|_{\FX_{r, q}^{s, 1/r+\varepsilon}}\|v\|_{\FX_{r^{\prime}, q^{\prime}}^{-1-s,  1/r^{\prime}-2\varepsilon}}.
\end{align}
To simplify notations, we write $u_{N_j}={\wh\varphi}(t)P_{N_j}u_j$ for $j=1,2,3$, and $v_{N}={\wh\varphi}(t)P_Nv$. Applying the Littlewood-Paley dyadic decomposition to each component of \eqref{tri7}, we first prove
\begin{align}\label{tri7-2}
\left|\int u_{N_1}u_{N_2}u_{N_3}v_N\, dxdt\right|&\leq C(N_1,N_2,N_3, N) \prod^3_{j=1}\|u_{N_j}\|_{\FX_{r, q}^{s, 1/r+\varepsilon}}\|v_N\|_{\FX_{r^{\prime}, q^{\prime}}^{-1-s, 1/r^{\prime}-2\varepsilon}},
\end{align}
where $C(N_1, N_2, N_3, N)$ is a suitable bound that allows us to sum over all dyadic numbers.

A straightforward computation yields
\EQN{
\int u_{N_1}u_{N_2}u_{N_3}v_N\, dxdt=\int_{\Gamma}\prod_{j=1}^3\wh{u}_{N_j}(\xi_j,\tau_j)\wh{v}_{N}(\xi,\tau)d\mu,
}
where $\Gamma=\{\xi_1+\xi_2+\xi_3+\xi=0, \tau_1+\tau_2+\tau_3+\tau=0\}$ and $d\mu$ denotes the induced Lebesgue measure. So
\EQ{\label{eq:4linear}
\aabs{\int u_{N_1}u_{N_2}u_{N_3}v_N\, dxdt}\les & \int_{\Gamma}\prod_{j=1}^3|\wh{u}_{N_j}(\xi_j,\tau_j)|\cdot |\wh{v}_{N}(\xi,\tau)|\, d\mu.
}
For a function $u$ defined on $\R^2$, we define a transform as
\EQN{
\wt{u}(x,t):=\ft^{-1}|\wh{u}(\xi,\tau)|.
}
Note that $\norm{u}_{\FX^{s,b}_{r,q}}=\norm{\wt{u}}_{\FX^{s,b}_{r,q}}$ and
\EQ{
\int_{\Gamma}\prod_{j=1}^3|\wh{u}_{N_j}(\xi_j,\tau_j)|\cdot |\wh{v}_{N}(\xi,\tau)|\, d\mu \sim \aabs{\int \wt{u}_{N_1} \wt u_{N_2} \wt u_{N_3} \wt v_N\, dxdt}.
}
This allows us to put absolute values on both physical and frequency sides. 
To prove \eqref{tri7-2}, it suffices to prove 
\begin{align}\label{tri7-3}
\left|\int \wt{u}_{N_1} \wt u_{N_2} \wt u_{N_3} \wt v_N\, dxdt\right|&\leq C(N_1,N_2,N_3, N) \prod^3_{j=1}\|u_{N_j}\|_{\FX_{r, q}^{s, 1/r+\varepsilon}}\|v_N\|_{\FX_{r^{\prime}, q^{\prime}}^{-1-s, 1/r^{\prime}-2\varepsilon}}.
\end{align}
By symmetry, we may assume $N_1\geq N_2\geq N_3$.  We may also assume that $N_1\sim \max(N_2, N)$ by the frequency support properties of the functions. Then we need to make sure the bound is summable, namely
\EQ{
\sum_{N_1,N_2,N_3,N: N_1\sim \max(N_2,N)}C(N_1,N_2,N_3, N) &\prod^3_{j=1}\|u_{N_j}\|_{\FX_{r, q}^{s, 1/r+\varepsilon}}\|v_N\|_{\FX_{r^{\prime}, q^{\prime}}^{-1-s, 1/r^{\prime}-2\varepsilon}}\\
\les & \prod^3_{j=1}\|u_j\|_{\FX_{r, q}^{s, 1/r+\varepsilon}}\|v\|_{\FX_{r^{\prime}, q^{\prime}}^{-1-s,  1/r^{\prime}-2\varepsilon}}.
}

Let $\sigma_j$ and $\sigma$ denote the modulations given by
$$\sigma_j=\tau_j-\xi_j^3, \qquad  \sigma=\tau-\xi^3.$$
Under the restrictions $\xi_1+\xi_2+\xi_3+\xi=0$ and $\tau_1+\tau_2+\tau_3+\tau=0$, we have
\begin{equation*}
-\tau+\xi^3=\tau_1-\xi_1^3+\tau_2-\xi_2^3+\tau_3-\xi_3^3 -\Phi(\xi_1, \xi_2, \xi_3)
\end{equation*}
where $\Phi$ is the resonance function
\EQ{
\Phi(\xi_1, \xi_2, \xi_3)=(\xi_1+\xi_2+\xi_3)^3-(\xi_1^3+\xi_2^3+\xi_3^3)=3(\xi_1+\xi_2)(\xi_2+\xi_3)(\xi_1+\xi_3).
}
Let $$\sigma_{max}=\max\{|\sigma_1|,|\sigma_2|,|\sigma_3|,|\sigma|\}.$$
Thus we may assume 
\begin{equation}\label{sigma_max}
\sigma_{max}\gtrsim |\Phi|
\end{equation}
in the domain $\Gamma$ of the integral \eqref{eq:4linear}.

We will prove \eqref{tri7-3} case by case. 

{\bf Case 1:} $N_1\lesssim 1$ or $N_1\gg 1$ with $\sigma_{max} \gg N_1^{20 r'}$. These are trivial cases.
	
{$\bullet$} $N_1\lesssim 1$, then $N\les 1$. 
We have
\begin{align*}
&\left|\int \wt u_{N_1} \wt u_{N_2} \wt u_{N_3} \wt v_N\, dxdt\right|\\
\lesssim& \|\wt u_{N_1}\| _{\widehat{L}^{r}_{t,x}}\|\wt u_{N_2}\| _{{\widehat{L}^{\infty}_{t,x}}}\|\wt u_{N_3}\| _{{\widehat{L}^{\infty}_{t,x}}}\|\wt v_{N}\|_{\widehat{L}^{r^{\prime}}_{t,x}}\\
\lesssim&\|\wt u_{N_1}\|_{\FX_{r,q}^{s,1/r+}}\|\wt u_{N_2}\|_{\FX_{r,q}^{s,1/r+}}\|\wt u_{N_3}\|_{\FX_{r,q}^{s,1/r+}}\|\wt v_{N} \|_{\FX_{r^{\prime}, q^{\prime}}^{-1-s, 1/r^{\prime}-2\e}},
\end{align*}
which is summable.
	
{$\bullet$} $N_1\gg 1$ with $\sigma_{max} \gg N_1^{20 r'}$.
We may assume $|\sigma|=\sigma_{max}$, and the other cases follow by a similar argument. 
By H\"older's inequality, we have
\begin{align*}
&\left|\int \wt u_{N_1} \wt u_{N_2} \wt u_{N_3} \wt v_Ndxdt\right|\\
\lesssim &\|\wt u_{N_1}\| _{\widehat{L}^{r}_{t,x}}\|\wt u_{N_2}\| _{\widehat{L}^{\infty}_{t,x}}\|\wt u_{N_3}\| _{\widehat{L}^{\infty}_{t,x}}\|\wt v_{N}\|_{\widehat{L}^{r^{\prime}}_{t,x}}\\
\lesssim& N_1^{-2}\|\wt u_{N_1}\|_{\FX_{r,q}^{s,1/r+}}\|\wt u_{N_2}\|_{\FX_{r,q}^{s,1/r+}}\|\wt u_{N_3}\|_{\FX_{r,q}^{s,1/r+}}\|\wt v_{N} \|_{\FX_{r^{\prime}, q^{\prime}}^{-1-s, 1/r^{\prime}-2 \e}},
\end{align*}
which is summable.  
	
Therefore, in what follows we only need to consider
\begin{equation}
N_1 \gg 1 \quad \text{and} \quad \sigma_{max} \les N_1^{20 r'}.
\end{equation}

{\bf Case 2:} $N_{1}\gg N_{2}\geq N_{3}$. 
	
In this case, we have $N_1\sim N$ and $|\Phi|\sim N_1^{2}|\xi_2+\xi_3|=N_1^{2}|\xi_1+\xi|$.
By \eqref{sigma_max} we may assume $\sigma_{max}\ges N_1^{2} |\xi_1+\xi|$ in $\Gamma$ and obtain
\EQ{\label{tri-case2}
\aabs{\int \wt{u}_{N_1} \wt u_{N_2} \wt u_{N_3} \wt v_N\, dxdt} \leq \sum_{K}\aabs{\int P_K(\wt{u}_{N_1}\wt v_N) \wt u_{N_2} \wt u_{N_3}dxdt}. 
}
	
{$\bullet$} $K\les 1$ in \eqref{tri-case2}. Since $|\xi_1+\xi|=|\xi_2+\xi_3|\les 1$, we have
\begin{equation}\label{case1}
\begin{aligned}
\eqref{tri-case2}\les \sum_{|k_1+k|\les 1}\int \left|(\Box_{k_1}\wt{u}_{N_1})\wt{u}_{N_2}\wt{u}_{N_3}(\Box_k\wt{v}_N)\right|dxdt.
\end{aligned}
\end{equation}
Applying H\"older's inequality, bilinear estimate \eqref{cor:dy-bi} and \eqref{Holder}, we see that
\EQ{\label{case2a-1}
\eqref{case1}\les & \sum_{|k_1+k|\les 1}\|(\Box_{k_1}\wt{u}_{N_1})\wt{u}_{N_2}\|_{\widehat{L}^r_{t,x}}\|\wt{u}_{N_3}(\Box_k\wt{v}_{N})\|_{\widehat{L}^{r^{\prime}}_{t,x}}\\
\lesssim&  \sum_{k} N_1^{-2/r}\|\Box_{-k}\wt{u}_{N_1}\|_{\FX_r^{0,1/r+}}\|\wt{u}_{N_2}\|_{\FX_r^{0,1/r+}}N_1^{-2/r^{\prime}}\|\wt{u}_{N_3}\|_{\FX_{r^{\prime}}^{0,1/r^{\prime}+}}\|\Box_k\wt{v}_{N}\|_{\FX_{r^{\prime}}^{0,1/r^{\prime}+}}\\
\lesssim& \sum_{k}N_1^{-2}N_3^{1/r-1/r^{\prime}}\|\Box_{-k}\wt{u}_{N_1}\|_{\FX_r^{0,1/r+}}\|\wt{u}_{N_2}\|_{\FX_r^{0,1/r+}}\|\wt{u}_{N_3}\|_{\FX_{r}^{0,1/r+}}\|\Box_k\wt{v}_{N}\|_{\FX_{r^{\prime}}^{0,1/r^{\prime}+}}\\
\lesssim& N_1^{-1+\frac{1}{8r'}+}N_2^{-s+1/r^{\prime}-1/q}N_3^{-s+1/r-1/q}\\	&\cdot\|u_{N_1}\|_{\FX_{r,q}^{s,1/r+}}\|u_{N_2}\|_{\FX_{r,q}^{s,1/r+}}\|u_{N_3}\|_{\FX_{r, q}^{s,1/r+}}\|v_{N}\|_{\FX_{r^{\prime}, q^{\prime}}^{-1-s,1/r^{\prime}-2\e}},
}
where in the last step of \eqref{case2a-1} we used the fact that
\begin{equation}
\|v_{N}\|_{\FX_{r^{\prime}, q^{\prime}}^{-1-s,1/r^{\prime}+}}\les N_{1}^{\frac{1}{8r'}}\|v_{N}\|_{\FX_{r^{\prime}, q^{\prime}}^{-1-s,1/r^{\prime}-2\e}}
\end{equation}
since $\sigma_{max} \les N_1^{20 r'}$.  If $-s+1/r-1/q\leq 0$, the above bound is summable. If $-s+1/r-1/q > 0$, then
\begin{equation*}
\eqref{case2a-1}\les N_1^{-1+\frac{1}{8r'}+} N_2^{-2s+1-2/q}\|u_{N_1}\|_{\FX_{r,q}^{s,1/r+}}\|u_{N_2}\|_{\FX_{r,q}^{s,1/r+}}\|u_{N_3}\|_{\FX_{r, q}^{s,1/r+}}\|v_{N}\|_{\FX_{r^{\prime}, q^{\prime}}^{-1-s,1/r^{\prime}-2\e}}
\end{equation*}
 which is summable if {$s>\frac{1}{16 r'}-\frac{1}{q}$}.

{$\bullet$} $K\ges 1$ in \eqref{tri-case2}. 
Since $|\xi_1+\xi|=|\xi_2+\xi_3|\sim K \ges1$, we take the box decomposition with length $K$ on the terms $\wt{u}_{N_1}$ and $\wt{v}_{N}$. Then we have
\begin{equation}\label{case2b}
\begin{aligned}		
\eqref{tri-case2}\les&{\sum_{K}}\sum_{|k_1+k|\les 1} \int \left|(\Box^K_{k_1} \wt{u}_{N_1})\wt{u}_{N_2}\wt{u}_{N_3}(\sum_m\Box_m\Box_k^K\wt{v}_N)\right|dxdt\\
\les&{\sum_{K}}\sum_{k}\int \left|(\Box^K_{-k} \wt{u}_{N_1})\wt{u}_{N_2}\wt{u}_{N_3}(\sum_m\Box_m\Box_k^K\wt{v}_N)\right|dxdt.
\end{aligned}
\end{equation}
	
If {{$|\sigma_3|=\sigma_{max}$,}} by bilinear estimate \eqref{cor:dy-bi}, H\"older's inequality \eqref{Holder}, and the largest modulation $\sigma_{max} \ges N_1^{2} K$, we see that
\begin{equation}
\begin{aligned}
\eqref{case2b}\les&\sum_{K}\sum_{k}\sum_m\|(\Box^K_{-k}\wt{u}_{N_1})\wt{u}_{N_2}\|_{\widehat{L}^{r}_{t,x}}\|\wt{u}_{N_3}\|_{\widehat{L}^{r^{\prime}}_{t,x}}\|\Box_m\Box_k^K\wt{v}_{N}\|_{\widehat{L}^{\infty}_{t,x}}\\
\les&\sum_{K}\sum_{k}\sum_m N_1^{-2/r}N_3^{1/r-1/r^{\prime}}(N_{1}^{2}K)^{-1/r-}\\
&\cdot\|\Box^{K}_{-k}\wt{u}_{N_1}\|_{\FX_{r}^{0,1/r+}}\|\wt{u}_{N_2}\|_{\FX_{r}^{0,1/r+}}\|\wt{u}_{N_3}\|_{\FX_{r}^{0, 1/r+}}\|\Box_m\Box^{K}_{k}\wt{v}_{N}\|_{{\FX^{0, 1/r^{\prime}+}_{r^{\prime}}}}\\
\les&\sum_{K}\sum_{k} N_1^{-2/r}N_3^{1/r-1/r^{\prime}}(N_{1}^{2}K)^{-1/r-}K^{1/q}\\ &\cdot\|\Box^{K}_{-k}\wt{u}_{N_1}\|_{\FX_r^{0,1/r+}}\|\wt{u}_{N_2}\|_{\FX_r^{0,1/r+}}\|\wt{u}_{N_3}\|_{\FX_{r}^{0, 1/r+}}\|\Box^{K}_{k}\wt{v}_{N}\|_{\FX^{0,1/r^{\prime}+}_{r^{\prime},q^{\prime}}} \\
\les&\sum_{K}\sum_k K^{1/r^{\prime}-1/q}N_1^{-2/r}N_2^{1/r^{\prime}-1/q}N_3^{1/r-1/q}(N_{1}^{2}K)^{-1/r-}K^{1/q}\\ &\cdot\|\Box^{K}_{-k}u_{N_1}\|_{\FX_{r,q}^{0,1/r+}}\|u_{N_2}\|_{\FX_{r,q}^{0,1/r+}}\|u_{N_3}\|_{\FX_{r,q}^{0,1/r+}}\|\Box^{K}_{k}v_{N}\|_{\FX_{r^{\prime},q^{\prime}}^{0,1/r^{\prime}+}}\\
\les&  N_1^{1-4/r +\frac{1}{8r'}+}N_2^{-s+1/r^{\prime}-1/q}N_3^{-s+1/r-1/q}\\
&\cdot \|u_{N_1}\|_{\FX_{r,q}^{s,1/r+}}\|u_{N_2}\|_{\FX_{r,q}^{s,1/r+}}\|u_{N_3}\|_{\FX_{r,q}^{s,1/r+}}\|v_{N}\|_{\FX_{r^{\prime},q^{\prime}}^{-1-s,1/r^{\prime}-2\e}}.
\end{aligned}
\end{equation}
Therefore, the bound is summable if {$s> 1-\frac{2}{r}+\frac{1}{16r'}-\frac{1}{q}$.}

If $|\sigma_2|=\sigma_{max}$, this subcase follows the same line as in subcase $|\sigma_3|=\sigma_{max}$ and leads again to the condition {$s> 1-\frac{2}{r}+\frac{1}{16r'}-\frac{1}{q}$}.
	
If {{$|\sigma_1|=\sigma_{max}$,}} then we have
\begin{equation}\label{sigma_1}
\begin{aligned}
\eqref{case2b}
\les&\sum_{K}\sum_{k} \|\Box^{K}_{-k}\wt{u}_{N_1}\|_{\widehat{L}^{r^{\prime}}_{t,x}}\|\wt{u}_{N_2}(\sum_m\Box_m\Box^{K}_{k}\wt{v}_{N})\|_{\widehat{L}^{r}_{t,x}}\|\wt{u}_{N_3}\|_{\widehat{L}^{\infty}_{t,x}}\\
\les&\sum_{K}\sum_{k}\sum_m K^{1/r-1/r^{\prime}} (N_1^{2}K)^{-1/r-}N_1^{-2/{r}}N_3^{1/r}\\
&\cdot\|\Box^{K}_{-k}\wt{u}_{N_1}\|_{\FX_{r}^{0,1/r+}}\|\wt{u}_{N_2}\|_{\FX_{r}^{0,1/r+}}\|\wt{u}_{N_3}\|_{\FX_r^{0,1/r+}}\|\Box_{m}\Box^{K}_{k}\wt{v}_{N}\|_{\FX^{0,1/r+}_{r}}\\
\les&  N_1^{1-\frac{4}{r}+\frac{1}{8r'}+}N_2^{-s+1/r^{\prime}-1/q}N_3^{-s+1-1/q}\\
&\cdot \|u_{N_1}\|_{\FX_{r,q}^{s,1/r+}}\|u_{N_2}\|_{\FX_{r,q}^{s,1/r+}}\|u_{N_3}\|_{\FX_{r,q}^{s,1/r+}}\|v_{N}\|_{\FX_{r^{\prime},q^{\prime}}^{-1-s,1/r^{\prime}-2\e}}.
\end{aligned}
\end{equation}
This bound is summable if {$s>\frac{3}{2}-\frac{5}{2r}-\frac{1}{q}+\frac{1}{16r'}$}.

If {\underline{$|\sigma |=\sigma_{max}$,}} similarly we have                                    
\begin{equation}
\begin{aligned}
\eqref{case2b}
\les&\sum_{K}\sum_{k}\sum_m \|(\Box^{K}_{-k}\wt{u}_{N_1})\wt{u}_{N_2}\|_{\widehat{L}^r_{t,x}}\|\wt{u}_{N_3}\|_{\widehat{L}_{x}^{r'}\widehat{L}_{t}^{\infty}}\|\Box_m\Box^{K}_{k}\wt{v}_{N}\|_{\widehat{L}_{x}^{\infty}\widehat{L}_{t}^{r^{\prime}}}\\
\les&\sum_{K}\sum_{k}\sum_m N_1^{-2/r}N_3^{1/r-1/r^{\prime}}(N_{1}^{2}K)^{-1/r^{\prime}-}\\
&\cdot\|\Box^{K}_{-k}\wt{u}_{N_1}\|_{\FX_r^{0,1/r+}}\|\wt{u}_{N_2}\|_{\FX_r^{0,1/r+}} \|\wt{u}_{N_3}\|_{\FX_r^{0,1/r+}}\|\Box_m \Box^{K}_{k}\wt{v}_{N}\|_{\FX^{0, 1/ r^{\prime}+}_{r^{\prime}}}\\
\les & \sum_{K}\sum_k K^{1/r^\prime-1/q }N_1^{-2/r}N_2^{1/r^{\prime}-1/q}N_3^{1/r-1/q}(N_{1}^{2}K)^{-1/r^{\prime}-}K^{1/q}\\
&\cdot \|\Box^{K}_{-k}u_{N_1}\|_{\FX_{r,q}^{0,1/r+}}\|u_{N_2}\|_{\FX_{r,q}^{0,1/r+}}\|u_{N_3}\|_{\FX_{r,q}^{0,1/r+}}\|\Box^{K}_{k}v_{N}\|_{\FX^{0,1/r^{\prime}+}_{r^{\prime},q^{\prime}}}\\
\les & N_1^{-1+\frac{1}{8r'}+}N_2^{-s+1/r^{\prime}-1/q}N_3^{-s+1/r-1/q}\\
&\cdot\|u_{N_1}\|_{\FX_{r,q}^{s,1/r+}}\|u_{N_2}\|_{\FX_{r,q}^{s,1/r+}}\|u_{N_3}\|_{\FX_{r,q}^{s,1/r+}}\|v_{N}\|_{\FX^{-1-s,1/r^{\prime}-2\e}_{r^{\prime},q^{\prime}}}.
\end{aligned}
\end{equation}
This bound is summable if {$s>\frac{1}{16r'}-\frac{1}{q}$.}
 
{\bf Case 3:} $N_{1}\sim N_{2} \gg N_{3}$. 
	
We may assume $\sigma_{max}\ges N_1^{2}|\xi_1+\xi_2| \sim N_1^{2}K$. We have
\begin{equation}\label{case3}
\left|\int \wt u_{N_1}\wt u_{N_2}\wt u_{N_3}\wt v_Ndxdt\right|\leq \sum_K\left|\int P_{K}(\wt u_{N_1}\wt u_{N_2})\wt u_{N_3}\wt v_Ndxdt\right|.
\end{equation}

{$\bullet$} Assume $K\les 1$. 
We have $|\xi_1+\xi_2|=|\xi_3+\xi|\les 1$ and
\begin{equation*}
\begin{aligned}
\eqref{case3}
\les& \sum_{|k_3+k|\les 1}\left|\int  \wt u_{N_1}\wt u_{N_2}(\Box_{k_3} \wt u_{N_3})(\Box_k \wt v_N)dxdt\right|\\
\les& \sum_k \|\wt u_{N_1}(\Box_{-k} \wt u_{N_3})\|_{\widehat{L}^{r}_{t, x}}\|\wt u_{N_2}(\Box_k \wt v_N)\|_{\widehat{L}^{r^{\prime}}_{t, x}}\\
\les &\sum_{k}N_1^{-2/r} \|u_{N_1}\|_{\FX_r^{0,1/r+}}\|\Box_{-k}u_{N_3}\|_{\FX_r^{0,1/r+}}N_1^{-2/r^{\prime}}	\|u_{N_2}\|_{\FX_{r^{\prime}}^{0,1/r^{\prime}+}}\|\Box_kv_{N}\|_{\FX_{r^{\prime}}^{0,1/r^{\prime}+}}\\
\les& N_1^{-2s+\frac{1}{8r'}-\frac{2}{q}+}\|u_{N_1}\|_{\FX_{r,q}^{s,1/r+}} \|u_{N_2}\|_{\FX_{r,q}^{s,1/r+}}\|u_{N_3}\|_{\FX_{r,q}^{s,1/r+}}\|v_{N}\|_{\FX_{r^{\prime}, q^{\prime}}^{-1-s,1/r^{\prime}-2\e}}.
\end{aligned}
\end{equation*}
This bound is summable if {$s>\frac{1}{16r'}-\frac{1}{q}$}.
	
{$\bullet$} Assume $K\ges 1$. We assume $|\sigma_3|=\sigma_{max}$ as other cases are similar and easier. If $N_3\sim N$, then we need to localize $|\xi_3+\xi|\sim K$ if $\xi_3$ and $\xi$ have opposite signs. Therefore, whether $N_3\gg N$, $N\gg N_3$ or $N_3\sim N$, {we use the improved bilinear estimate \eqref{improve-bi} with $q=2r'$ to obtain}
\begin{equation}
\begin{aligned}
\eqref{case3}
\les&{\sum_{K}}\sum_{k}\sum_{m}\left|\int P_K(\wt{u}_{N_1}\wt{u}_{N_2})(\Box^K_{-k}\wt{u}_{N_3})(\Box_{m}\Box^K_{k}\wt{v}_N)dxdt \right|\\
\les& {\sum_{K}}\sum_{k}\sum_{m} \norm{P_K(\wt{u}_{N_1}\wt{u}_{N_2})}_{\widehat{L}^{r}_{t,x}}\norm{\Box^K_{-k}\wt{u}_{N_3}}_{\widehat{L}^{r^{\prime}}_{t,x}}\norm{\Box_m\Box^K_{k} \wt{v}_N}_{\widehat{L}^{\infty}_{t,x}}\\
\les &{\sum_{K}}\sum_k N_1^{-1/r}K^{1/r^{\prime}-1/r}K^{1/r-1/r^{\prime}}(N_1^{2}K)^{-1/r-}K^{1/r^{\prime}-1/q}K^{1/q}\\
&\cdot\|\wt{u}_{N_1}\|_{\FX_{r,2r'}^{0,1/r+}} \|\wt{u}_{N_2}\|_{\FX_{r,2r'}^{0,1/r+}}\|\Box^K_{-k}\wt{u}_{N_3}\|_{\FX_{r, q}^{0,1/r+}}\|\Box^K_{k}\wt{v}_{N}\|_{\FX_{r^{\prime},q^{\prime}}^{0,1/r^{\prime}+}}\\
\les &\sum_{K}\sum_k N_1^{-3/r-} K^{1/r^{\prime}-1/r-}\max\{1, N_1^{1/r'-2/q} \}\\
&\cdot\|\wt{u}_{N_1}\|_{\FX_{r,q}^{0,1/r+}} \|\wt{u}_{N_2}\|_{\FX_{r,q}^{0,1/r+}}\|\Box^K_{-k}\wt{u}_{N_3}\|_{\FX_{r, q}^{0,1/r+}}\|\Box^K_{k}\wt{v}_{N}\|_{\FX_{r^{\prime},q^{\prime}}^{0,1/r^{\prime}+}}\\
\les& N_1^{-2s+2-4/r-2/q+\frac{1}{8r'}+} \|u_{N_1}\|_{\FX_{r,q}^{s,1/r+}} \|u_{N_2}\|_{\FX_{r,q}^{s,1/r+}}\|u_{N_3}\|_{\FX_{r, q}^{s,1/r+}}\|v_{N}\|_{\FX_{r^{\prime},q^{\prime}}^{-1-s,1/r^{\prime}-2\e}}.
\end{aligned}
\end{equation}
This bound is summable if {$s>1-\frac{2}{r}-\frac{1}{q}+\frac{1}{16r'}$}. Note that when $r' \leq q \leq 2r'$, we apply the embedding $\ell^q \subset \ell^{2r'}$ in the forth step; while for $2r' \leq q \leq \infty$, we apply the H\"older inequality.
 
{\bf Case 4: $N_1\sim N_2\sim N_3$.} 
	
If $N_1\sim N_2\sim N_3\gg N$, then it is similar to Case 3. 
However, a delicate analysis is needed for $N_1\sim N_2\sim N_3\sim N$.
Without loss of generality, we may assume by symmetry that the domain of the integration is given by 
\EQ{\Gamma^{{\prime}}=\Gamma\cap \{\sigma_{max}\ges N_1|(\xi_1+\xi)(\xi_3+\xi)|\}\cap \{|\xi_1+\xi|\leq |\xi_3+\xi|\}.}
Therefore, the integration turns to
\begin{equation}
\begin{aligned}		&\left|\int_{\Gamma^{\prime}}\wh{u}_{N_1}\wh{u}_{N_2}\wh{u}_{N_3}\wh{v}_{N}\right|\\
\leq&\bigg|\left(\int_{\Gamma^{\prime}, |\xi_3+\xi|\les 1}+\int_{\Gamma^{\prime}, |\xi_1+\xi|\les 1, |\xi_3+\xi|\ges 1}+\int_{\Gamma^{\prime}, |\xi_1+\xi|\ges 1, |\xi_3+\xi|\ges 1}\right)
\wh{u}_{N_1}\wh{u}_{N_2}\wh{u}_{N_3}\wh{v}_{N}\bigg|\\
=:&I+II+III.
\end{aligned}
\end{equation}
	
{$\bullet$} Term $I$. For this term, we perform box decomposition for each frequency and get from the trilinear estimate (b) in Lemma \ref{lem:trilinear-FL} and \eqref{stri-pq} that for $1 < r \leq 2$
\begin{equation}
\begin{aligned}
I\les& \sum_k \left|\int (\Box_k \wt{u}_{N_1})(\Box_{-k} \wt{u}_{N_2})(\Box_k \wt{u}_{N_3}) (\Box_{-k}\wt{v}_N)dxdt\right|\\
\les& \sum_k\left\|(\Box_k \wt{u}_{N_1})(\Box_{-k} \wt{u}_{N_2})(\Box_k \wt{u}_{N_3})\right\|_{\fL_{t,x}^{r}}\norm{\Box_{-k}\wt{v}_N}_{\fL^{r^{\prime}}_{t,x}}\\
\les&\left(\prod_{j=1}^3 N_j^{-1 / 3 r}\left\|\widetilde{u}_{N_j}\right\|_{\widehat{X}_{r, q}^{0, 1/r+}}\right)\left\|\widetilde{v}_N\right\|_{\widehat{X}_{r^{\prime},q^{\prime}}^{0, 1/r^{\prime}-2\e}}\\
\les& {N_1^{-1/r-2s+1} }\prod_{j=1}^3\norm{u_{N_j}}_{\FX_{r,q}^{s,1/r+}} \norm{v_N}_{\FX_{r^{\prime},q^{\prime}}^{-1-s,1/r^{\prime}-2\e}}.
\end{aligned}
\end{equation}
This bound is summable if {$s\geq \frac{1}{2}-\frac{1}{2r}$}. This is in fact the strongest condition. 
	
{$\bullet$} Term $II$. Since $|\xi_1+\xi|\les 1$ and $|\xi_3+\xi|\ges 1$, we may assume $|\xi_3+\xi|=|\xi_1+\xi_2| \sim K \ges 1$. Then the term $II$ yields
\EQ{\label{tri-II}
II\les& \sum_k \sum_{K\ges 1}\left|\int P_K(\Box_k \wt{u}_{N_1} \wt{u}_{N_2}) P_K (\Box_{-k}\wt{v}_N \wt{u}_{N_3})dxdt\right|.}
Since $|\xi_1+\xi_2|\sim K$ and $|\xi_1-k|\leq 1$, $\xi_2$ is located in an interval centred at $-k$ with length $\sim K$. 
Similarly, $|\xi_3+\xi|\sim K$ and $|\xi+k|\leq 1$, which implies that $\xi_3$ is located in an interval centred at $k$ with length $\sim K$. Therefore, it is equivalent to write $\wt{u}_{N_2}$ as $ \Box_{-k/K}^K\wt{u}_{N_2}$, and $ \wt{u}_{N_3}$ as $\Box_{k/K}^K\wt{u}_{N_3}$. By using \eqref{cor:mo-bi} and \eqref{Holder}, \eqref{tri-II} turns to
\begin{equation*}
\begin{aligned}
II\les& \sum_k {\sum_{K}}\left|\int P_K(\Box_k \wt{u}_{N_1} \Box_{-k/K}^K\wt{u}_{N_2}) P_K(\Box_{-k}\wt{v}_N  \Box_{k/K}^K \wt{u}_{N_3})dxdt\right|\\
\les& \sum_k {\sum_{K}}\norm{P_K(\Box_k \wt{u}_{N_1} \Box_{-k/K}^K\wt{u}_{N_2})}_{\widehat{L}^{r}_{t,x}}\norm{P_K(\Box_{-k}\wt{v}_N  \Box_{k/K}^K \wt{u}_{N_3})}_{\widehat{L}^{r^{\prime}}_{t,x}}\\
\les& \sum_k{\sum_{K}} N_1^{-1/r}K^{-1/r}N_1^{-2s+1}\norm{\Box_k \wt{u}_{N_1}}_{\FX_r^{s,1/r+}}\norm{\Box_{-k/K}^K \wt{u}_{N_2}}_{\FX_r^{s,1/r+}} \\
& \cdot N_1^{-1/r^{\prime}}K^{-1/r^{\prime}}\norm{\Box_{k/K}^K \wt{u}_{N_3}}_{\FX_{r^{\prime}}^{s,1/r^{\prime}+}}\norm{\Box_{-k}\wt{v}_N}_{\FX_{r^{\prime}}^{-1-s, 1/r^{\prime}+}}\\
\les& \sum_k {\sum_{K}} N_1^{-1+\frac{1}{8r'}+}K^{-1/r-1/r^{\prime}}N_1^{-2s+1}K^{1/r-1/r^{\prime}}K^{1/r^{\prime}-1/q}K^{1/r^{\prime}-1/q}\\
& \cdot\norm{ \Box_k \wt{u}_{N_1}}_{\FX_{r,q}^{s, 1/r+}}\norm{\Box_{-k/K}^K \wt{u}_{N_2}}_{\FX_{r,q}^{s, 1/r+}} \norm{\Box_{k/K}^K \wt{u}_{N_3}}_{\FX_{r,q}^{s, 1/r+}}\norm{ \Box_{-k} \wt{v}_N}_{\FX_{r^{\prime}, q^{\prime}}^{-1-s,1/r^{\prime}-}}\\
\les& {N_1^{\frac{1}{8r'}-2s+} }\norm{ u_{N_1}}_{\FX_{r,q}^{s,1/r+}}\norm{u_{N_2}}_{\FX_{r,q}^{s,1/r+}} \norm{u_{N_3}}_{\FX_{r,q}^{s,1/r+}}\norm{v_N}_{\FX_{r^{\prime}, q^{\prime}}^{-1-s,1/r^{\prime}-2\e}}.
\end{aligned}
\end{equation*}
This bound is summable if {$s>\frac{1}{16r'}$}.
	
{$\bullet$} Term $III$. We localize $|\xi_1+\xi|=|\xi_2+\xi_3|\sim K_1$ and $|\xi_3+\xi|=|\xi_1+\xi_2|\sim K_2$ for dyadic numbers $1 \lesssim K_1\leq K_2$, which implies $\sigma_{max}\ges N_1K_1K_2$.  
We may discuss the subcases $|\sigma_1|=\sigma_{max}$ and $|\sigma_2|=\sigma_{max}$, since the others are similar and easier.
	
{\underline{$|\sigma_1|=\sigma_{max}$.}} We have
\EQ{\label{III-1-explain}
III\les& \sum_k \sum_{1\lesssim K_1}\left|\int \Box_k^{K_1} \wt{u}_{N_1}P_{K_1}( \wt{u}_{N_2} \wt{u}_{N_3}) \Box_{-k}^{K_1}\wt{v}_N dxdt\right|.}
We observe that $\xi_1$ is located in an interval centred at $kK_1$ with length $\sim K_1$. Therefore, $|\xi_1+\xi_2|\sim K_2$ implies that $\xi_2$ is located in an interval centred at $-kK_1$ with length $\sim K_2$. 
Similarly, since $|\xi_3+\xi|\sim K_2$, then $\xi_3$ is located in an interval centred at $kK_1$ with length $\sim K_2$. 
Thus, it is equivalent to write $\wt{u}_{N_2}$ as $\Box_{-kK_1/K_2}^{K_2}\wt{u}_{N_2}$, and $\wt{u}_{N_3}$ as $\Box_{kK_1/K_2}^{K_2}\wt{u}_{N_3}$ in \eqref{III-1-explain}. Hence, we have
\begin{equation*}\label{III-1}
\begin{aligned}
\eqref{III-1-explain}\les& \sum_k {\sum_{K_1\leq K_2}}\left|\int (\Box_k^{K_1} \wt{u}_{N_1})P_{K_{1}}(\Box_{-kK_1/K_2}^{K_2}\wt{u}_{N_2} \Box_{kK_1/K_2}^{K_2} \wt{u}_{N_3}) (\Box_{-k}^{K_1}\wt{v}_N) dxdt\right|\\
\les& \sum_k {\sum_{K_1\leq K_2}}\norm{\Box_k^{K_1} \wt{u}_{N_1}}_{\widehat{L}^{r^{\prime}}_{t,x}}\norm{P_{K_1}(\Box_{-kK_1/K_2}^{K_2}\wt{u}_{N_2} \Box_{kK_1/K_2}^{K_2} \wt{u}_{N_3})}_{\widehat{L}^{r}_{t,x}}\\
&\cdot\norm{\sum_m \Box_m\Box_k^{K_1}\wt{v}_N}_{\widehat{L}^{\infty}_{t,x}}\\
\les& \sum_k {\sum_{K_1\leq K_2}} K_1^{1/r-1/r^{\prime}}(N_1K_1 K_2)^{-1/r-}K_1^{1/r^{\prime}-1/q}\norm{\Box_k^{K_1} \wt{u}_{N_1}}_{\FX_{r, q}^{0,1/r+}}\\
&\cdot {N_1^{-1/r}K_1^{1/r^{\prime}-1/r}\max\{1, K_2^{1/r^{\prime}-2/q}\}}\norm{\Box_{-kK_1/K_2}^{K_2}\wt{u}_{N_2}}_{\FX_{r,q}^{0,1/r+}}
\norm{\Box_{kK_1/K_2}^{K_2}{u}_{N_3}}_{\FX_{r,q}^{0,1/r+}}\\
&\cdot K_1^{1/q}\norm{\Box_k^{K_1}\wt{v}_N}_{\FX_{r^{\prime},q^{\prime}}^{0,1/r^{\prime}+}} \\
\les& \sum_k {\sum_{K_1\leq K_2}}{ N_1^{-2s+1-2/r+\frac{1}{8r'}+}}K_{1}^{1/r^{\prime}-1/r-}K_2^{1/r^{\prime}-1/r-2/q-}\norm{\Box_k^{K_1} \wt{u}_{N_1}}_{\FX_{r, q}^{s,1/r+}}\\
&\cdot\norm{\wt{u}_{N_2}}_{\FX_{r,q}^{s,1/r+}} \norm{\wt{u}_{N_3}}_{\FX_{r,q}^{s,1/r+}}\norm{\Box_k^{K_1}\wt{v}_N}_{\FX_{r^{\prime},q^{\prime}}^{-1-s,1/r^{\prime}-2\e}}\\
\les& {N_1^{-2s+1-2/r+\frac{1}{8r'}+}}\norm{ u_{N_1}}_{\FX_{r,q}^{s,1/r+}}\norm{u_{N_2}}_{\FX_{r,q}^{s,1/r+}} \norm{u_{N_3}}_{\FX_{r,q}^{s,1/r+}}\norm{v_N}_{\FX_{r^{\prime},q^{\prime}}^{-1-s,1/r^{\prime}-2\e}},
\end{aligned}
\end{equation*}
where we mainly used the improved bilinear estimate \eqref{improve-bi} in the third step. This bound is summable if $s>\frac{1}{2}-\frac{1}{r}+\frac{1}{16r'}$.
	
{\underline{$|\sigma_2|=\sigma_{max}$.}}   
If $K_{1} \leq K_{2}$, we need to localize $|\xi_1+\xi_3|\sim K_3$ if $\xi_1$ and $\xi_3$ have different signs. 
 
If $K_3\les K_1$, then $\xi_2$ and $\xi_3$ can be localized to intervals of size $K_1$. Indeed, since $\xi_1$ is located in an interval centred at $k K_1$ with length $\sim K_1$ as well as $|\xi_1+\xi_3|\sim K_3$, then $\xi_3$ is located to an interval centred at $-k K_1$ with length $\sim K_1$. Since $|\xi_2+\xi_3|\sim K_1$, then $\xi_2$ is located in an interval centred at $k K_1$ with length $ \sim K_1$. Hence, we have
\begin{equation}\label{III-2-explain}
\begin{aligned}
III
\les& \sum_k \sum_{K_1\leq K_2}\left|\int (\Box_k^{K_1} \wt{u}_{N_1}) (\Box_{k}^{K_1}\wt{u}_{N_2}) (\Box_{-k}^{K_1} \wt{u}_{N_3}) (\Box_{-k}^{K_1}\wt{v}_N) dxdt\right|.
\end{aligned}
\end{equation}
Applying the trilinear estimate (a) in Lemma \ref{lem:trilinear-FL} with $r=1$, we get
\begin{equation}\label{III_tri_1}
\begin{aligned}
\eqref{III-2-explain}\les& \sum_k {\sum_{K_1\leq K_2}}\norm{(\Box_k^{K_1} \wt{u}_{N_1}) (\Box_{k}^{K_1}\wt{u}_{N_2}) (\Box_{-k}^{K_1} \wt{u}_{N_3})}_{\widehat{L}^{1}_{t,x}}\norm{\sum_{m}\Box_{m}\Box_{-k}^{K_1}\wt{v}_N}_{\widehat{L}^{\infty}_{t,x}}\\
\les&\sum_k {\sum_{K_1\leq K_2}} N_{1}^{-1}\norm{\Box_k^{K_1} \wt{u}_{N_1}}_{\FX_{1, \infty}^{0+,1+}}\norm{\Box_{k}^{K_1}\wt{u}_{N_2}}_{\FX_{1, \infty}^{0,1+}} \norm{\Box_{-k}^{K_1}\wt{u}_{N_3}}_{\FX_{1, \infty}^{0,1+}} \norm{\Box^{K_1}_{-k}\wt{v}_N}_{\FX_{\infty, 1}^{0,0}}\\
\les&{N_{1}^{-2s+}\norm{{u}_{N_1}}_{\FX_{1, \infty}^{s,1+}}\norm{{u}_{N_2}}_{\FX_{1, \infty}^{s,1+}} \norm{{u}_{N_3}}_{\FX_{1, \infty}^{s,1+}}
\norm{{v}_N}_{\FX_{\infty,1}^{-1-s,0}}}.
\end{aligned}
\end{equation}
On the other hand, using the improved $L^{4}$-estimate \eqref{L4-2}, we obtain for any $q\geq 2$
\begin{equation}\label{III_tri_2}
\begin{aligned}
\eqref{III-2-explain}\les& \sum_k {\sum_{K_1\leq K_2}}\norm{(\Box_k^{K_1} \wt{u}_{N_1}) (\Box_{k}^{K_1}\wt{u}_{N_2}) (\Box_{-k}^{K_1} \wt{u}_{N_3})}_{{L^{1}_{t,x}}} \norm{\sum_{m}\Box_{m}\Box_{-k}^{K_1}\wt{v}_N}_{{L^{\infty}_{t,x}}}\\
\les& \sum_k {\sum_{K_1\leq K_2}}\norm{\Box_k^{K_1} \wt{u}_{N_1}}_{{L^{4}_{t,x}}} \norm{\Box_{k}^{K_1}\wt{u}_{N_2}}_{{L^{2}_{t,x}}} \norm{\Box_{-k}^{K_1} \wt{u}_{N_3}}_{{L^{4}_{t,x}}} \norm{\sum_{m}\Box_{m}\Box_{-k}^{K_1}\wt{v}_N}_{{L^{\infty}_{t,x}}}\\
\les& \sum_k {\sum_{K_1\leq K_2}} N_1^{-1/8+}\norm{\Box_k^{K_1} \wt{u}_{N_1}}_{\FX_{2,4}^{0, 1/2+}}(N_1K_1K_2)^{-1/2-}\norm{\Box_{k}^{K_1}\wt{u}_{N_2}}_{\FX_{2}^{0,1/2+}}\\
&\cdot N_1^{-1/8+} \norm{\Box_{-k}^{K_1}\wt{u}_{N_3}}_{\FX_{2, 4}^{0, 1/2+}}K_1^{1/q}\norm{\Box^{K_1}_{-k}\wt{v}_N}_{\FX_{2,q^{\prime}}^{0,1/2+}}\\
\les &\sum_k {\sum_{K_1\leq K_2}} N_1^{-2s+1/4+} K_1^{1/q}(K_1K_2)^{-1/2-}\norm{\Box_k^{K_1} \wt{u}_{N_1}}_{\FX_{2,4}^{s, 1/2+}}\norm{\Box_{k}^{K_1}\wt{u}_{N_2}}_{\FX_{2}^{s,1/2+}}\\
&\cdot \norm{\Box_{-k}^{K_1}\wt{u}_{N_3}}_{\FX_{2, 4}^{s, 1/2+}}\norm{\Box^{K_1}_{-k}\wt{v}_N}_{\FX_{2,q^{\prime}}^{-1-s,1/2+}}\\
\les &\sum_k {\sum_{K_1\leq K_2}} N_1^{-2s+1/4+} K_1^{1/q}(K_1K_2)^{-1/2-}\max\{1,K_1^{1/2-2/q}\}K_1^{1/2-1/q}\\
&\cdot \norm{\Box_k^{K_1} \wt{u}_{N_1}}_{\FX_{2,q}^{s, 1/2+}}\norm{\Box_{k}^{K_1}\wt{u}_{N_2}}_{\FX_{2,q}^{s,1/2+}}\norm{\Box_{-k}^{K_1}\wt{u}_{N_3}}_{\FX_{2, q}^{s, 1/2+}}\norm{\Box^{K_1}_{-k}\wt{v}_N}_{\FX_{2,q^{\prime}}^{-1-s,1/2+}}\\
\les& N_1^{-2s+1/4+} \norm{{u}_{N_1}}_{\FX_{2,q}^{s, 1/2+}}\norm{{u}_{N_2}}_{\FX_{2,q}^{s,1/2+}}\norm{{u}_{N_3}}_{\FX_{2, q}^{s, 1/2+}}\norm{{v}_N}_{\FX_{2,q^{\prime}}^{-1-s,1/2+}}.
\end{aligned}
\end{equation}
By the interpolation between \eqref{III_tri_1} and \eqref{III_tri_2}, we obtain for $1\leq r \leq 2$, $q\geq r'$
\EQN{\label{III-2}
\eqref{III-2-explain}
\les& {N_{1}^{-2s+\frac{5}{8r'}+}}\norm{{u}_{N_1}}_{\FX_{r, q}^{s, 1/r+}} \norm{{u}_{N_2}}_{\FX_{r, q}^{s, 1/r+}}\norm{{u}_{N_3}}_{\FX_{r, q}^{s,1/r+}}
\norm{{v}_N}_{\FX_{r^{\prime}, q^{\prime}}^{-1-s,1/r^{\prime}-2\e}}.
}
Hence, this bound is summable if $s>\frac{5}{16 r'}$.

If $K_3\ges K_1$, then we can localize $\xi_2$ and $\xi_3$ to intervals of size $\min(K_2,K_3)$. Without loss of generality, we assume $K_3\leq K_2$. Indeed, since $|\xi_1+\xi_3|\sim K_3$, and $\xi_1$ is located in an interval centred at $k K_1$ with length $\sim K_1$, then $\xi_3$ is located to an interval centred at $-k K_1$ with length $\sim K_3$. On the other hand, since $|\xi_2+\xi_3|\sim K_1$, then $\xi_2$ is located to interval centred at $k K_1$ with length $\sim K_3$. Therefore, we can write $\wt{u}_{N_2}$ as $\Box_{k K_{1}/K_3}^{K_3}\wt{u}_{N_2}$ and $\wt{u}_{N_3}$ as  $\Box_{-k K_1/K_3}^{K_3}\wt{u}_{N_3}$. Then, by Lemma \ref{lem:bilinear-FL} we see that
\EQN{\label{III-3}
III\les& \sum_k \sum_{K_3 \gtrsim K_1} \sum_{ K_1 \leq K_2} \left|\int (\Box_k^{K_1} \wt{u}_{N_1})( \Box_{k K_1/K_3}^{K_3}\wt{u}_{N_2})( \Box_{-k K_1/K_3}^{K_3} \wt{u}_{N_3})( \Box_{-k}^{K_1}\wt{v}_N) dx dt\right|\\
\les& \sum_k \sum_{K_3 \gtrsim K_1} \sum_{ K_1 \leq K_2} \norm{{P_{K_3}}(\Box_k^{K_1} \wt{u}_{N_1}\Box_{-k K_1/K_3}^{K_3} \wt{u}_{N_3})}_{\widehat{L}^{r}_{t,x}} \norm{\Box_{k K_1/K_3}^{K_3}\wt{u}_{N_2}}_{\widehat{L}^{r^{\prime}}_{t,x}}\\
&\cdot\norm{\sum_m\Box_m\Box_{-k}^{K_1}\wt{v}_N}_{\widehat{L}^{\infty}_{t,x}}\\
\les& \sum_k\sum_{K_3 \gtrsim K_1} \sum_{ K_1 \leq K_2}N_1^{-1/r}K_3^{-1/r}K_1^{1/r^{\prime}-1/q}K_3^{1/r^{\prime}-1/q}\norm{\Box_k^{K_1}{u}_{N_1}}_{\FX_{r,q}^{0,1/r+}}\\
&\cdot \norm{\Box_{-k K_1/K_3}^{K_3}{u}_{N_3}}_{\FX_{r,q}^{0,1/r+}}K_3^{1/r-1/r^{\prime}}(N_1K_1K_2)^{-1/r-}K_3^{1/r^{\prime}-1/q}\norm{\Box_{k K_1/K_3}^{K_3}{u}_{N_2}}_{\FX_{r, q}^{0,1/r+}}\\
&\cdot K_1^{1/q} \norm{\Box_{-k}^{K_1}{v}_N}_{\FX^{0,1/r^{\prime}+}_{r^{\prime},q^{\prime}}}\\
\les& {N_1^{-2s+1-2/r+\frac{1}{8r'}}}\norm{u_{N_1}}_{\FX_{r,q}^{s,1/r+}}\norm{u_{N_2}}_{\FX_{r, q}^{s,1/r+}} \norm{u_{N_3}}_{\FX_{r,q}^{s,1/r+}}\norm{v_N}_{\FX_{r^{\prime},q^{\prime}}^{-1-s,1/r^{\prime}-2\e}}.}
This bound is summable if {$s>\frac{1}{2}-\frac{1}{r}+\frac{1}{16r'}$}. 

Therefore, by taking the intersection of all the ranges of $s$, we see the condition is $s\geq \frac{1}{2r'}$. This completes the proof of Proposition \ref{prop:tri}.

\section{Ill-posedness}
In this section, we prove the ill-posedness result, which is equivalent to proving the following proposition:
\begin{proposition}
Under the assumptions of Theorem \ref{th1}, let $s < \frac{1}{2}-\frac{1}{2r}$
and $0<\delta \ll 1$. Then the solution map $\delta u_0 \rightarrow u_\delta(x, t)$ of the equation \eqref{eq:mkdv} in $\gFL^{s}_{r, q}$ is not $C^3$ continuous at the origin.
\end{proposition}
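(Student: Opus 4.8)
The plan is to reduce the failure of $C^3$ smoothness to an unboundedness statement for the third-order term in the power-series expansion of the solution map, and then to exhibit an explicit family of initial data realizing a resonant frequency interaction that forces this term to blow up precisely when $s<s(r)$. Since the nonlinearity $\pm u^2\partial_x u=\pm\frac{1}{3}\partial_x(u^3)$ is cubic, the Picard iteration for \eqref{eq:mkdv} produces a solution of the formal shape $u=W(t)u_0+A_3(u_0)+O(u_0^5)$, in which the quadratic term is absent and
\EQ{
A_3(u_0)(t)=\pm\frac{1}{3}\int_0^t W(t-t')\,\partial_x\big[(W(t')u_0)^3\big]\,dt'
}
is trilinear in $u_0$. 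If the data-to-solution map $S\colon u_0\mapsto u$ were $C^3$ at the origin, then $\delta\mapsto S(\delta u_0)$ would be three times differentiable at $\delta=0$, its third derivative being a fixed nonzero multiple of $A_3(u_0)$; boundedness of the third Fréchet derivative of $S$ would then force
\EQ{
\sup_{t\in[0,1]}\norm{A_3(u_0)(t)}_{\gFL^s_{r,q}}\les \norm{u_0}_{\gFL^s_{r,q}}^3 .
}
It therefore suffices to construct data violating this trilinear estimate.

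First I would compute $A_3$ on the Fourier side. Performing the $t'$-integration gives
\EQ{
\mathcal{F}_x[A_3(u_0)](t,\xi)=\pm\frac{i\xi}{3}\,e^{it\xi^3}\int_{\xi_1+\xi_2+\xi_3=\xi}\Big(\int_0^t e^{-it'\Phi}\,dt'\Big)\prod_{j=1}^3\widehat{u_0}(\xi_j)\,d\xi_1\,d\xi_2 ,
}
where $\Phi=3(\xi_1+\xi_2)(\xi_2+\xi_3)(\xi_1+\xi_3)$ is the same resonance function appearing in the proof of Proposition~\ref{prop:tri}. The inner time integral is comparable to $t$ on the resonant set $\{|\Phi|\les1\}$ and is $O(1/|\Phi|)$ away from it. To put weight on the resonance I would choose, for a large dyadic $N$ and a width $\alpha\in(0,1)$ to be optimized, the complex-valued datum
\EQ{
\widehat{u_0}=\gamma\big(\mathbf{1}_{I_+}+\mathbf{1}_{I_-}\big),\qquad I_+=[N,N+\alpha],\quad I_-=[-N-\alpha,-N],
}
with $\gamma$ chosen so that $\norm{u_0}_{\gFL^s_{r,q}}\sim1$, i.e. $\gamma\sim N^{-s}\alpha^{-1/r'}$. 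Here the complex-valued framework is essential, as it lets the Fourier support occupy both signs freely and produces the interaction $(\xi_1,\xi_2,\xi_3)\approx(N,N,-N)$, for which $\xi_2+\xi_3$ and $\xi_1+\xi_3$ are both $O(\alpha)$ while $\xi_1+\xi_2\approx 2N$, so $|\Phi|\les N\alpha^2$ and the output frequency $\xi=\xi_1+\xi_2+\xi_3$ lies in an interval of length $\sim\alpha$ centred at $N$.

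Choosing $\alpha\sim N^{-1/2}$ makes $|\Phi|\les1$ on the entire $(+,+,-)$ interaction region, so for fixed $t$ of order one one has $\re\int_0^t e^{-it'\Phi}\,dt'\ges t$ throughout. Expanding $(\mathbf{1}_{I_+}+\mathbf{1}_{I_-})^{\ast 3}$, the only summand supported near the output frequency $N$ is $3\,\mathbf{1}_{I_+}\!\ast\mathbf{1}_{I_+}\!\ast\mathbf{1}_{I_-}$, whose value is $\sim\alpha^2$ on an interval of length $\sim\alpha$; taking real parts and using that $\prod_j\widehat{u_0}(\xi_j)\ge0$ shows there is no cancellation. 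Restricting the $\gFL^s_{r,q}$ norm to the single box centred at $N$ then yields
\EQ{
\norm{A_3(u_0)(t)}_{\gFL^s_{r,q}}\ges N^{s}\big(N\gamma^3\alpha^2\big)\alpha^{1/r'}\sim N^{1-2s-1/r},
}
after inserting $\gamma\sim N^{-s}\alpha^{-1/r'}$ and $\alpha\sim N^{-1/2}$. Since $\norm{u_0}_{\gFL^s_{r,q}}^3\sim1$, the ratio grows like $N^{1-2s-1/r}$, which diverges as $N\to\infty$ exactly when $s<\frac{1}{2}-\frac{1}{2r}=s(r)$, contradicting the required boundedness of the third derivative of $S$ and establishing the proposition.

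The hard part will be making the lower bound rigorous, that is, confirming that the resonant contributions genuinely add rather than partially cancelling. This reduces to two points: that on the resonant region the time factor $\int_0^t e^{-it'\Phi}\,dt'$ stays bounded away from zero in real part (which is exactly why the scaling $\alpha\sim N^{-1/2}$, forcing $|\Phi|\les1$, is optimal), and that replacing the sharp indicators $\mathbf{1}_{I_\pm}$ by the smooth frequency cutoffs used in \eqref{box} preserves the size of the triple convolution on the output box. Both follow from the nonnegativity of $\prod_j\widehat{u_0}(\xi_j)$ together with elementary estimates on the measure of the interaction set, but they must be carried out with care to guarantee a genuine lower bound rather than a mere heuristic.
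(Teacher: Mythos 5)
Your proposal is correct and follows essentially the same route as the paper: the same reduction to the failure of the trilinear bound on the third Picard iterate, the same two-bump datum $\widehat{u_0}\sim N^{-s}\alpha^{-1/r'}(\mathbf{1}_{[N,N+\alpha]}+\mathbf{1}_{[-N-\alpha,-N]})$ with $\alpha\sim N^{-1/2}$, the same resonance computation forcing $|\Phi|\ll1$, and the same lower bound $N^{1-2s-1/r}$. The only cosmetic difference is that you single out the $(+,+,-)$ convolution explicitly while the paper restricts the output frequency to a subwindow of $[N,N+\alpha]$; both yield the identical conclusion.
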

\begin{proof}
We define the solution map for the defocusing case (the focusing case can also be treated by the same line) as follows:
\begin{equation*}
\mathcal{T}: \quad \delta u_0 \rightarrow u_\delta(x, t)=W(t) \delta u_0+\int_0^t W(t-s)(u^3)_x(s) d s.
\end{equation*}
By straightforward computations, we have
\begin{equation}\label{iteration}
\begin{aligned}
& \left.\frac{\partial u}{\partial \delta}\right|_{\delta=0}=W(t) u_0=:u_1, \\
& \left.\frac{\partial^2 u}{\partial \delta^2}\right|_{\delta=0}=0=:u_2,\\
& \left.\frac{\partial^3 u}{\partial \delta^3}\right|_{\delta=0}=6 \int_0^t W(t-s) \partial_x(W(s) u_0)^3 d s=:u_3.
\end{aligned}
\end{equation}
It is well known that if the map $\delta u_0 \rightarrow u_\delta$ is of class $C^3$ at the origin, the necessary condition is
\begin{equation}\label{data}
\sup _{t \in[0, T]}\left\|u_3\right\|_{\gFL^{s}_{r,q}} \leq C\left\|u_0\right\|_{\gFL^{s}_{r,q}}^3.
\end{equation}
We seek for data $u_{0}$ such that \eqref{data} fails. 

Since the condition $s\geq \frac{1}{2r'}$ is from the resonant case, we construct an example of that case.
Consider $u_0 \in \gFL^{s}_{r,q}, s<1/2-1/2r$ defined by
\begin{equation}
\widehat{u_0}(\xi)=N^{-s}\beta^{-{1}/{r^\prime}}(\chi_{[-N-\beta, -N]}(\xi)+\chi_{[N, N+\beta]}(\xi)),\ N \gg 1,
\end{equation}
where $\beta=\beta(N)=2^{-10}N^{-1/2}<1$.
Note that $\left\|u_0\right\|_{\gFL^{s}_{r,q}} \sim 1.$
We estimate the Fourier transform of $u_3$ in \eqref{iteration} as follows
\EQN{
&\ft_{x}(u_3)(\xi,t)\\
= &C \int_0^t e^{i(t-s) \omega(\xi)}(i\xi)\int_{\mathbb{R}^2} e^{is\omega\left(\xi-\xi_1-\xi_2\right)} \widehat{u_0}(\xi-\xi_1-\xi_2) e^{i s \omega(\xi_1)} \widehat{u_0}(\xi_1)e^{i s \omega(\xi_2)} \widehat{u_0}(\xi_2) d \xi_1d \xi_2 ds\\
= & e^{i t \omega\left(\xi\right)}(i\xi)\int_{\mathbb{R}^2} \int_0^t e^{is \Psi(\xi, \xi_1,\xi_2)} d s \widehat{u_0}(\xi-\xi_1-\xi_2) \widehat{u_0}(\xi_1)\widehat{u_0}(\xi_2) d \xi_1 d \xi_2 \\
= & e^{i t \omega(\xi)}(i\xi)\int_{\mathbb{R}^2} \frac{e^{i t \Psi(\xi, \xi_1, \xi_2)}-1}{i\Psi(\xi, \xi_1, \xi_2)} \widehat{u_0}(\xi-\xi_1-\xi_2) \widehat{u_0}(\xi_1)\widehat{u_0}(\xi_2) d \xi_1 d \xi_2,
}
where $\Psi\left(\xi, \xi_1, \xi_2\right)=-3(\xi-\xi_1)(\xi-\xi_2)(\xi_1+\xi_2)$. 

We assume $\xi \in [N+\beta/3, N+\beta/2]$. Then we have either $\xi_1, -\xi_2\in [-N-\beta, -N]$ or $-\xi_1, \xi_2\in [-N-\beta, -N]$, otherwise $u_3=0$. For $\xi_1,-\xi_2\in [-N-\beta, -N]$ or $-\xi_1,\xi_2\in [-N-\beta, -N]$, we have
\EQN{
|\Psi(\xi,\xi_1,\xi_2)|\ll N \beta^2\sim 1.
}
Therefore, for $\xi \in [N+\beta/3, N+\beta/2]$
\EQN{
|\widehat{u_3}(t, \xi)|
\sim N^{-3s}\beta^{-{3}/{r^\prime}}N t \beta^2
}
and so
\EQN{
\left\|u_3\right\|_{\gFL_{r, q}^{s}} \geq C tN^{-2 s+1}\beta^{-{3}/{r^\prime}+2+{1}/{r^\prime}} \gtrsim t N^{-2 s+1-1/r}.
}
Taking $N\gg 1$, we see \eqref{data} does not hold if 
$s<\frac{1}{2r'}$.
\end{proof}

\end{document}